\pgfplotsset{compat=newest}
\pgfplotsset{ytick style={draw=none}}
\pgfplotsset{xtick style={draw=none}}
\numberwithin{figure}{section}
\theoremstyle{plain}
\newtheorem{thm}{Theorem}[section]
\newtheorem{lem}[thm]{Lemma}
\newtheorem{cor}{Corollary}[thm]
\theoremstyle{definition}
\theoremstyle{remark}
\title[NON-CLASSICAL GENERATING SETS IN FS GROUPS]{NON-CLASSICAL GENERATING SETS IN FUCHSIAN SCHOTTKY GROUPS}
\author[A. A. Shaikh]{Absos Ali Shaikh$^{*1}$}
\author[U. Roy]{Uddhab Roy$^{2}$}
\address{$^{1,2}$Department of Mathematics\\ The University of Burdwan\\ Burdwan--713104\\ West Bengal\\ India.}
\email{$^1$aask2003@yahoo.co.in, aashaikh@math.buruniv.ac.in}
\email{$^2$uddhabroy2018@gmail.com}
\begin{document}

\noindent\footnotetext{$^*$ Corresponding author.\\
	$\mathbf{2020}$  \textit{Mathematics Subject Classification}. Primary 20H10; Secondary 30F35.\\
	\indent\textit{Keywords and phrases}. Fuchsian group; Schottky group; Fuchsian Schottky
	group; non-classical; hyperbolic element.}

\maketitle

\begin{abstract}
The goal of this article is to initiate the study of estimates of the non-classical Schottky structure in the discrete subgroups of the projective special linear group over the real numbers degree $2$. In fact, in this paper, we have investigated the non-classical generating sets in the Fuchsian Schottky groups on the hyperbolic plane with boundary. A Schottky group is usually considered non-classical if the curves used in the Schottky construction are Jordan curves (except the Euclidean circles). More precisely, in this manuscript, we have provided a structure of the rank $2$ Fuchsian Schottky groups with non-classical generating sets by utilizing two suitable hyperbolic M\"obius transformations on the upper-half plane model. In particular, we have derived two non-trivial examples of Fuchsian Schottky groups with non-classical generating sets in the upper-half plane with the circle at infinity as the boundary. 
\end{abstract}

\section{\textbf{INTRODUCTION}}
In 1974, Marden (\cite{Marden}, \cite{Marden1974}) introduced the concept of non-classical Schottky groups with a non-constructive proof. In particular, Marden \cite{Marden} derived that the intersection of the closure of the classical Schottky space with the Schottky space is not the entire Schottky space, and hence there exist Schottky groups that are not classical. It is evident that a Schottky group is still classical if it is non-classical on a particular generating set. Although, it is also a very challenging task to construct a Schottky group with a non-classical generating set. In 1975, Zarrow \cite{Zarrow} claimed that he had found an example of a rank $2$ non-classical Schottky group, but later it was proved to be classical by Sato \cite{Sato}. More precisely, the Schottky group constructed by Zarrow \cite{Zarrow} was, in fact, a classical Schottky group, but on a different set of generators, and the demonstration of this was the main result in Sato's paper \cite{Sato}. In 1991, Yamamoto \cite{Yamamoto} first presented an example of the rank $2$ non-classical Schottky group in the Riemann sphere. Then, in 2009 Williams \cite{Williams} also delivered an example of the rank 2 non-classical Schottky group by applying the procedure suggested by Yamamoto in \cite{Yamamoto}. So, in the literature, the works related to the non-classical structure that have been studied in Schottky groups are all in Kleinian flavor. Indeed, in the Kleinian group theory, the non-classical structure of Schottky groups is known in the literature. However, in the Fuchsian group theory, the non-classical Schottky structure (more precisely, an explicit framework of a non-classical generating set) is still somewhat of an open problem. In $1998$, Button \cite{Button} proved that all Fuchsian Schottky groups are classical Schottky groups, but not necessarily on the same set of generators. Furthermore, Button in \cite{Button} furnished a brief example (with figure) of a Fuchsian group which is Schottky on a particular generating set, but non-classical on those generators. On the other hand, Marden (\cite{Marden}, \cite{Marden1974}) also established that all Fuchsian Schottky groups are classical. Therefore, in this direction, it is quite a difficult and fascinating problem to examine the following question.

\textbf{Question:} Does there exist any generating set for Fuchsian Schottky groups that is non-classical?

In this article, we have provided an affirmative answer to this above question by establishing Theorem $1$. 

Interestingly, it is known in the literature (more precisely, from Yamamoto's work \cite{Yamamoto}) that, the defining curves for a rank $2$ non-classical Kleinian Schottky group are two circles and two rectangles lying in the extended complex plane. Now, in the Fuchsian group, in this paper, we introduce the non-classical Schottky structure with defining curves as four hollow half-moons, i.e., four semi-circles equipped with the diameters lying on the circle at infinity (indicated by the red color arcs in Figure $2$ in Section $2$). In particular, in the Fuchsian group theory, these four hollow half-moons are the Jordan curves that represent the non-classical Schottky structure in the upper-half plane. In essence, Zarrow \cite{Zarrow} first provided the non-classical generating sets in the Kleinian Schottky group. In this article, we are going to initiate the non-classical generating sets in the Fuchsian Schottky group by establishing the following theorem. 

\begin{proof}[\textbf{Theorem 1.}] \label{t 1}
	The group $\Gamma^{N}_{S_{\kappa}}$ generated by the M\"obius transformations ${h^*}_{(3,2)_{\kappa}}$ and ${h^{**}}_{(4,1)_{\kappa}}$ is a non-classical Fuchsian Schottky group on the hyperbolic plane with the circle at infinity as the boundary for $\kappa < 10^{-11}$.
\end{proof}

In section $2$, the M\"obius transformations ${h^*}_{(3,2)_{\kappa}}$ and ${h^{**}}_{(4,1)_{\kappa}}$ are described in detail.\\ As a consequence of Theorem \ref{t 1}, in the following, we deduce Corollary \ref{c 1} and \ref{c 2} that provide the two non-trivial examples of such groups in the upper-half plane with the circle at infinity as the boundary.

\begin{cor}\label{c 1}
	The group $\Gamma^{N}_{S_{\kappa_1}}$ generated by the hyperbolic elements ${h^1}_{(3,2)_{\kappa_1}}$ and ${h^2}_{(4,1)_{\kappa_1}}$ is a non-classical Fuchsian Schottky group in the upper-half plane with the extended real numbers as the boundary for ${\kappa_1} < 4 \times 10^{-12}$.
\end{cor}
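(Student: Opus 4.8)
The plan is to recognize the pair $\bigl({h^1}_{(3,2)_{\kappa_1}},\,{h^2}_{(4,1)_{\kappa_1}}\bigr)$ as the instance of the construction set up just before Theorem~\ref{t 1} obtained by fixing the parameter value $\lambda=\mod(-2)=2$, and then to verify that the quantitative hypothesis $\kappa<10^{-11}$ of that theorem is met. First I would substitute $\lambda=2$ into the two defining formulas: the formula for ${h^*}_{(3,2)_{\kappa}}$ becomes
\[
{h^1}_{(3,2)_{\kappa_1}}: z\longmapsto\frac{2(1-\kappa_1)^{-1}z+(1-\kappa_1)\bigl\{4(1-\kappa_1)^{-2}-1\bigr\}}{(1-\kappa_1)^{-1}z+2(1-\kappa_1)^{-1}},
\]
and the formula for ${h^{**}}_{(4,1)_{\kappa}}$ becomes
\[
{h^2}_{(4,1)_{\kappa_1}}: z\longmapsto\frac{4(1-\kappa_1)^{-1}z+(1-\kappa_1)\bigl\{16(1-\kappa_1)^{-2}-1\bigr\}}{(1-\kappa_1)^{-1}z+4(1-\kappa_1)^{-1}}.
\]
A one-line determinant computation shows the associated $2\times 2$ matrices already lie in $SL(2,\mathbb{R})$, with traces $2\lambda(1-\kappa_1)^{-1}=4(1-\kappa_1)^{-1}$ and $2(\lambda+2)(1-\kappa_1)^{-1}=8(1-\kappa_1)^{-1}$ respectively; since $0<\kappa_1<4\times10^{-12}$, both traces are real and exceed $2$, so each generator is a hyperbolic M\"obius transformation, exactly as the corollary asserts.

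Next I would specialize the four defining semi-circles. With $\lambda=2$ they read $SC_{1,\kappa_1}=\{\lvert z-4\rvert=1-\kappa_1\}$, $SC_{2,\kappa_1}=\{\lvert z-2\rvert=1-\kappa_1\}$, $SC_{3,\kappa_1}=\{\lvert z+2\rvert=1-\kappa_1\}$ and $SC_{4,\kappa_1}=\{\lvert z+4\rvert=1-\kappa_1\}$. Their centers $\pm 2,\pm 4$ lie in $\mathbb{R}-[-1,1]$, and the whole configuration is invariant under the reflection $z\mapsto-\bar z$ of the upper imaginary axis (condition $(a)$); since any two of the centers are at Euclidean distance at least $2$ while each radius equals $1-\kappa_1<1$, the four half-moons are pairwise disjoint and non-tangential (condition $(b)$). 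Moreover the side-pairing relations ${h^1}_{(3,2)_{\kappa_1}}(SC_{3,\kappa_1})=SC_{2,\kappa_1}$ and ${h^2}_{(4,1)_{\kappa_1}}(SC_{4,\kappa_1})=SC_{1,\kappa_1}$ are precisely the relations displayed after the list of the four semi-circles, now read with $\lambda=2$. Hence the data $\bigl({h^1}_{(3,2)_{\kappa_1}},\,{h^2}_{(4,1)_{\kappa_1}};\,SC_{1,\kappa_1},\dots,SC_{4,\kappa_1}\bigr)$ is exactly the configuration covered by Theorem~\ref{t 1}.

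It then remains only to invoke Theorem~\ref{t 1}. Since $\kappa_1<4\times10^{-12}<10^{-11}$, the quantitative hypothesis of Theorem~\ref{t 1} is satisfied for this choice of $\lambda$ (the slight gap between the two numerical thresholds is harmless: either $\kappa_1$ is merely a more conservative choice of the parameter $\kappa$ occurring in Theorem~\ref{t 1}, or it enters the generators through a rescaling $\kappa=c\,\kappa_1$, in which case one only checks the elementary inequality $c\cdot 4\times10^{-12}\le 10^{-11}$ for the constant $c$ arising at $\lambda=2$). Theorem~\ref{t 1} then yields at once that $\Gamma^{N}_{S_{\kappa_1}}=\langle {h^1}_{(3,2)_{\kappa_1}},{h^2}_{(4,1)_{\kappa_1}}\rangle$ is a non-classical Fuchsian Schottky group on $\mathbb{H}^2$, whose boundary in the upper-half plane model is the circle at infinity $\mathbb{R}\cup\{\infty\}$, i.e.\ the extended real numbers. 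Because the statement is just a specialization of Theorem~\ref{t 1}, there is no substantial obstacle here; the one point that truly needs care is to re-run, with the explicit value $\lambda=2$, the chain of inequalities driving the non-classical part of the proof of Theorem~\ref{t 1} --- the step that excludes every system of Euclidean circles as defining curves for these particular generators --- and to confirm that it still closes under the sharper numerical bound $\kappa_1<4\times10^{-12}$; everything else is a direct quotation of Theorem~\ref{t 1} together with the elementary verifications above.
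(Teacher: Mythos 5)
Your proposal follows essentially the same route as the paper: in Section $4$ the authors prove Corollary \ref{c 1} precisely by substituting $\lambda=\mod(-2)=2$ into the generators $(1)$--$(2)$, observing that the resulting maps are the hyperbolic elements ${h^1}_{(3,2)_{\kappa_1}}$, ${h^2}_{(4,1)_{\kappa_1}}$, and invoking Theorem \ref{t 1} with $\kappa_1<4\times 10^{-12}<10^{-11}$. Your extra checks (determinant, traces $>2$, the specialized semi-circles and side-pairings) and your caution about re-running the numerical estimates at $\lambda=2$ to justify the sharper bound $4\times10^{-12}$ are sound and in fact slightly more explicit than what the paper itself records.
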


\begin{cor} \label{c 2}
	The group $\Gamma^{N}_{S_{\kappa_2}}$ generated by the hyperbolic elements ${h^3}_{(3,2)_{\kappa_2}}$ and ${h^4}_{(4,1)_{\kappa_2}}$ is a non-classical Fuchsian Schottky group in the upper-half plane model with $\mathbb{R} \cup \{ \infty \}$ as the boundary for ${\kappa_2} < 9 \times 10^{-12}$.
\end{cor}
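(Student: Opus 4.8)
The plan is to read off Corollary~\ref{c 2} from Theorem~\ref{t 1} by fixing the scalar parameter to its second admissible value. First I would put $\lambda=\mod(-\tfrac{5}{3})=\tfrac{5}{3}$ throughout the defining formulas (1) and (2); the two M\"obius maps so produced are exactly the hyperbolic elements $h^3_{(3,2)_{\kappa_2}}$ and $h^4_{(4,1)_{\kappa_2}}$ named in the statement. A short computation gives determinant $1$ in both cases and traces $\tfrac{10}{3}(1-\kappa_2)^{-1}$ and $\tfrac{22}{3}(1-\kappa_2)^{-1}$, each exceeding $2$ throughout the stated range of $\kappa_2$, so the generators are genuinely hyperbolic and lie in $PSL(2,\mathbb{R})$. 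With $\lambda=\tfrac{5}{3}$ the four semi-circles $SC_{1,\kappa_2},\dots,SC_{4,\kappa_2}$ have centres $\tfrac{11}{3},\tfrac{5}{3},-\tfrac{5}{3},-\tfrac{11}{3}$ on $\mathbb{R}\cup\{\infty\}$ and common radius $1-\kappa_2$; this boundary is the same object that Theorem~\ref{t 1} calls the circle at infinity.

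Next I would check that this concrete configuration meets the two structural hypotheses $(a)$ and $(b)$ underlying the construction: the symmetric list of centres $\pm\tfrac{5}{3},\pm\tfrac{11}{3}$ displays the reflection symmetry in the upper imaginary axis, while disjointness and non-tangency hold because the closest pair of centres is at Euclidean distance $2$ whereas the radii sum to only $2(1-\kappa_2)<2$ for $\kappa_2>0$. Hence the four half-moons (each $SC_{j,\kappa_2}$ together with its real diameter) are pairwise disjoint closed Jordan regions in $\overline{\mathbb{H}^2}$, and all four centres lie in $\mathbb{R}-[-1,1]$. I would then confirm the pairing relations $h^3_{(3,2)_{\kappa_2}}(SC_{3,\kappa_2})=SC_{2,\kappa_2}$ and $h^4_{(4,1)_{\kappa_2}}(SC_{4,\kappa_2})=SC_{1,\kappa_2}$ by direct substitution; these are the $\lambda=\tfrac{5}{3}$ instances of the relations recorded just before Theorem~\ref{t 1}. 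At this stage the Klein combination (ping-pong) argument goes through exactly as in the proof of Theorem~\ref{t 1}, so $\Gamma^N_{S_{\kappa_2}}$ is a free Fuchsian Schottky group of rank $2$ on the generating set $\{h^3_{(3,2)_{\kappa_2}},h^4_{(4,1)_{\kappa_2}}\}$ with the four half-moons as defining curves.

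It remains to establish non-classicality, and here I would simply invoke Theorem~\ref{t 1}: $\lambda=\mod(-\tfrac{5}{3})$ is one of the two parameters allowed in its hypothesis, and since $9\times 10^{-12}<10^{-11}$, every $\kappa_2$ admitted in Corollary~\ref{c 2} already satisfies the hypothesis of Theorem~\ref{t 1}, whence $\Gamma^N_{S_{\kappa_2}}$ is non-classical. To keep the example self-contained one may instead re-run, for $\lambda=\tfrac{5}{3}$, the Yamamoto-type contradiction behind Theorem~\ref{t 1}: assuming that $\Gamma^N_{S_{\kappa_2}}$ admitted a classical Schottky system, that is, two pairs of mutually disjoint Euclidean circles with side-pairing elements drawn from the group, one uses that the multipliers and traces of the generators (hence the coarse geometry of the limit set) are conjugacy invariants to pin down the forced centres and radii of these hypothetical circles, and then shows that the modulus datum $\mod(-\tfrac{5}{3})=\tfrac{5}{3}$, which plays here the role of Yamamoto's $\sqrt{2}$, forces two of them to overlap once $\kappa_2<9\times 10^{-12}$. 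I expect this overlap estimate to be the single genuinely delicate point; everything preceding it is a routine specialization of the general construction, and it reflects the fact that as $\kappa_2\to 0^+$ the half-moons acquire tangencies and the representation degenerates toward a boundary of Schottky space at which classical representatives are obstructed.
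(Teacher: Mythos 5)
Your proposal is correct and takes essentially the same route as the paper: Section~4 proves Corollary~\ref{c 2} precisely by substituting $\lambda=\bigl\lvert -\tfrac{5}{3}\bigr\rvert$ into the generators $(1)$--$(2)$ and invoking Theorem~\ref{t 1}, the stated range $\kappa_2<9\times 10^{-12}$ lying inside the theorem's range $\kappa<10^{-11}$. Your additional verifications (unit determinant and traces exceeding $2$, disjointness and non-tangency of the four half-moons, the pairing relations) and the optional self-contained Yamamoto-type re-run are consistent with, but go beyond, what the paper records, and do not alter the argument.
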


In Section $4$, the elements ${h^1}_{(3,2)_{\kappa_1}}$, ${h^2}_{(4,1)_{\kappa_1}}$, ${h^3}_{(3,2)_{\kappa_2}}$, and ${h^4}_{(4,1)_{\kappa_2}}$ are clarified in detail.\\

   \textbf{The paper is arranged as follows:} In Section $2$, we have organized the set up for non-classical generating sets in Fuchsian Schottky groups in the hyperbolic plane. In Section $3$, we have exhibited the non-classical Schottky structure in the Fuchsian Schottky group of rank $2$ (denoted by $\Gamma^{N}_{S_{\kappa}}$) on the upper-half plane through five lemmas. Lemma \ref{l 2.1} deals with the existence of a fundamental domain for the rank $2$ classical Fuchsian Schottky group which is surrounded by four semi-circles centered on the extended real numbers. In Lemma \ref{l 2.2}, under certain assumptions, we have computed the lengths of components of the domain of discontinuity for the group $\Gamma^{N}_{S_{\kappa}}$ that intersects the real axis. In Lemma \ref{l 2.3}, we have again estimated the lengths of components of the domain of discontinuity for the group $\Gamma^{N}_{S_{\kappa}}$ which intersects the upper vertical axis. Lemma \ref{l 2.4} is related to the regions in Lemma \ref{l 2.2} and Lemma \ref{l 2.3} with the distance between the semi-circles in the set ${SC}^*_\kappa$, where ${SC}^*_\kappa = \{SC^1_{\kappa}, SC^2_{\kappa}, SC^3_{\kappa}, ..., SC^{\textbf{I}^+}_{\kappa}\}$, ($\textbf{I}^+$ denotes the positive natural number) is the complete list of the images of the semi-circles $SC_{1, \kappa}, SC_{2, \kappa}, SC_{3, \kappa}$, and $SC_{4, \kappa}$ applying by the group  $\Gamma^{N}_{S_{\kappa}}$. In Lemma \ref{l 2.5}, we have displayed that the distance between the consecutive semi-circles in the set ${SC}^*_{\kappa}$ is less than $\frac{1}{5}$. Here, we have measured the distance along the real axis or the upper imaginary axis. After proving all these lemmas, we have established the main theorem of this article in Section $4$ (see, Theorem \ref{t 1}). Section $5$ deals with the two non-trivial examples of Fuchsian Schottky groups with non-classical generating sets in the upper-half plane model (see, Corollary \ref{c 1} and \ref{c 2}).
   
   \section{\textbf{TECHNICAL CORE FOR ORGANIZING NON-CLASSICAL STRUCTURE IN FUCHSIAN SCHOTTKY GROUPS}}
   
 In \cite{ShaikhRoy21}, we have provided the structure of arbitrary finite rank classical Fuchsian Schottky groups in the hyperbolic plane with $\mathbb{R}$ $\cup$ $\{\infty\}$ as the boundary on the harmony of the real Schottky groups with two subsequent additional conditions (see, Figure $1$ in the following):
 
 $(a)$ The semi-circles are situated by the reflection of the upper imaginary axis.
 
 $(b)$ The positions of the semi-circles on the circle at infinity are non-tangential.
 \\Recall that, in this manuscript, we aim to initiate the non-classical structure (more precisely, non-classical generating sets) for the Fuchsian Schottky group of rank $2$ in the upper-half plane model.  
   
\begin{center}
	\hspace*{-0.5cm}
	\includegraphics[width=16cm, height=6cm]{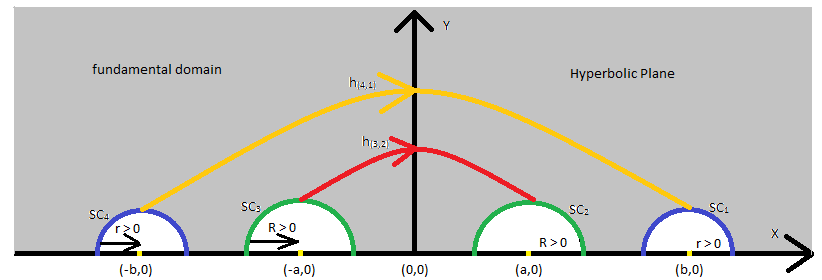} 
	$Figure: 1$
\end{center}

The preceding Figure $1$ supplies the structure of the rank $2$ classical Fuchsian Schottky group in the hyperbolic plane. Here, $SC_1$, $SC_2$, $SC_3$, and $SC_4$ indicate the semi-circles (with radius $r$ for $SC_1$, and $SC_4$; $R$ for $SC_2$, and $SC_3$) situating within the upper-half plane with centers lying on the boundary of the hyperbolic plane (i.e., circle at infinity). Here, $r$ and $R$ are the arbitrary positive real numbers. Apart from that, $h_{(4,1)}$, and $h_{(3,2)}$ represent the hyperbolic M\"obius transformations that pair the semi-circles with each other.

      It is well known that a Fuchsian group is a discrete subgroup of $PSL(2,\mathbb{R})$ (see, \cite{Katok}). On the other hand, a Schottky group is a special type of Kleinian group whereas the Kleinian groups are the discrete subgroups of $PSL(2,\mathbb{C})$ (see, \cite{Maskit1988} and \cite{Chuckrow} for details). In this paper, our goal is to set up the structure of the Fuchsian Schottky groups with non-classical generating sets by using the above two conditions $(a)$ and $(b)$ in the upper-half plane model with the circle at infinity as the boundary (see, Figure $2$ in the following). In particular, in this article, after looking at the strategy proposed by Yamamoto \cite{Yamamoto}, we explicitly prepare the framework of the rank $2$ Fuchsian Schottky groups with non-classical generating sets by imposing some new ideas in the technique operated by Yamamoto in the manuscript \cite{Yamamoto}. Firstly, for the rank $2$ Schottky group, Yamamoto \cite{Yamamoto} pointed out a suitable positive real number (viz., $\sqrt{2}$) in his used hyperbolic M\"obius transformations (one of the generators out of two) so that the group became non-classical in the extended complex plane. But in this investigation, for rank $2$ Fuchsian Schottky groups we have observed that there doesn't exist an analogous positive real number such that the groups become non-classical in the upper-half plane. Although the presence of negative real numbers is identified. For the lacuna of the existence of such a positive real number, in this study, we have taken the `$distance$' (in the Euclidean sense) between the origin and the particular number (to specify the position of that number in the hyperbolic plane, i.e., the modulus values of the numbers, without loss of generality of the construction) in the used M\"obius transformations (count as the rank of that group) to become these types of Schottky groups non-classical. Secondly, Yamamoto's construction \cite{Yamamoto} was fabricated by two circles and two rectangles in the Riemann sphere where circles were occupied by the reflection of the imaginary axis and rectangles were built up with some kind of dilation (with rotation). On the other hand, here we have approached the construction of non-classical Fuchsian Schottky groups on the upper-half plane with four semi-circles centered on the circle at infinity, and all the four semi-circles are situated by the reflections of the upper imaginary axis. Thirdly, in Yamamoto's paper \cite{Yamamoto}, the centers of the circles are lying on the real axis whereas the midpoints of the rectangles are the same and it is the origin of the axis. However, the centers of all the semi-circles in our construction belong to $\mathbb{R} - [-1, +1]$. Further, the defining curves for Yamamoto's non-classical Schottky group form the Jordan curves, whereas, in non-classical Fuchsian Schottky construction the semi-circles together with the diameters lying on the real axis create the Jordan loops look like the hollow half-moons (see, the red color curves in Figure $2$). In fact, in this manuscript, we have utilized these four hollow half-moons as the defining curves to organize the non-classical Schottky structure for Fuchsian Schottky groups in the hyperbolic plane. Fourthly, Yamamoto \cite{Yamamoto} has applied the sufficiently small positive number in one generator out of two, but the nature of the construction of Fuchsian Schottky groups demands to use of the sufficiently small positive number in both the generators to become the group non-classical which we have provided in this work. These are the four new notions that we have imposed in this paper. In this way, we have developed the literature by explicitly setting up the hollow half-moons as Jordan curves in the Schottky structure for Fuchsian Schottky groups and produced two non-trivial examples of such groups with non-classical generating sets in the upper-half plane with boundary.

Let us consider two M\"obius transformations ${h^*}_{(3,2)_{\kappa}}$ and ${h^{**}}_{(4,1)_{\kappa}}$ defined by
\begin{eqnarray}
{h^*}_{(3,2)_{\kappa}} \text{ $(\kappa \rightarrow 0^+)$ } : z & \longrightarrow & \frac{ \lambda (1- \kappa)^{-1} z + (1-\kappa)\{ \lambda ^2 (1-\kappa)^{-2} -1\}}{(1-\kappa)^{-1} z +  \lambda  (1-\kappa)^{-1}} \\ \text{  and  }
{h^{**}}_{(4,1)_{\kappa}} \text{ $(\kappa \rightarrow 0^+)$ } : z & \longrightarrow & \frac{( \lambda + 2) (1-\kappa)^{-1} z + (1-\kappa)\{( \lambda + 2)^2 (1-\kappa)^{-2} -1\}}{(1-\kappa)^{-1} z + ( \lambda + 2) (1-\kappa)^{-1}}
\end{eqnarray}
where, $z=(x+iy)\in \mathbb{H}^2$ and $ \lambda = \mod({-2})$ or $\mod(-\frac{5}{3})$, $\mod(c)$ denotes the modulus values of the number `$c$'. $\mathbb{H}^2$ represents the hyperbolic plane. 

The subsequent Figure $2$ indicates the formation of the rank $2$ non-classical Fuchsian Schottky groups in the upper-half plane model.

\begin{center}
	\hspace*{-0.1cm}
	\includegraphics[width=16.5cm, height=6.5cm]{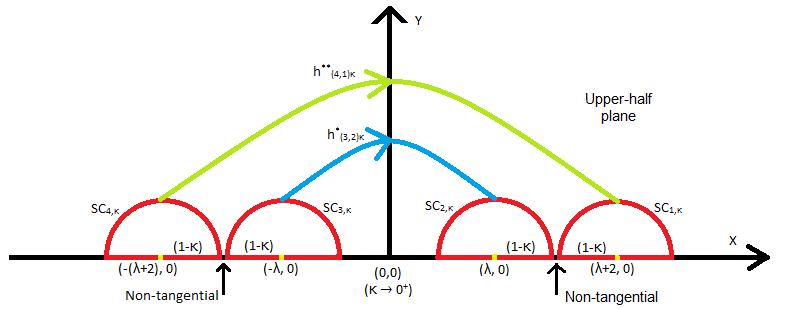} 
	$Figure: 2$
\end{center}

In the following, we set four semi-circles in the hyperbolic plane with centers lying on the boundary $\mathbb{R} \cup \{ \infty \}$.
\begin{eqnarray}
SC_{1, \kappa} &=& \{\lvert z-(\lambda + 2)\rvert = 1-\kappa\}, \notag \\
SC_{2,\kappa} &=& \{\lvert z - \lambda \rvert = 1- \kappa\}, \notag \\
SC_{3,\kappa} &=& \{\lvert z +  \lambda \rvert = 1- \kappa\}, \notag \\ \text{  and  }
SC_{4,\kappa} &=& \{\lvert z + (\lambda +2)\rvert = 1-\kappa\}. \notag 
\end{eqnarray}
Also \begin{eqnarray}
 h^*_{(3,2)_{\kappa}}(SC_{3,\kappa}) &=& SC_{2,\kappa}, \notag \\ \text{ and } {h^{**}}_{(4,1)_{\kappa}}(SC_{4,\kappa}) &=& SC_{1,\kappa}.   \notag
\end{eqnarray}

 In this article, we have used an innovative idea on the second M\"obius transformation, i.e., the map ${h^{**}}_{(4,1)_{\kappa}}$ in the ensuing way.
 
 We take, for $\kappa \rightarrow 0^+$,   $\Big(\frac{( \lambda  + 2)z + \{ ( \lambda  + 2)^2 - (1 - \kappa)^2\}}{z + ( \lambda  + 2)} \Big) < ( \lambda  + 2)$,  $\forall$  $z$  $\in$ $\mathbb{H}^2$ in the sense of the modulus values (i.e., the distance measurement) of the complex numbers in the upper-half plane, where $\mathbb{H}^2$ denotes the hyperbolic plane.\\
 For example, let, $z$ $=$ $x+iy$, where, $x$ $\in$ $\mathbb{R}$, $y$ $\in$ $\mathbb{R}^+$.\\ Then
  $\Big(\frac{( \lambda  + 2)z + \{ ( \lambda  + 2)^2 - (1 - \kappa)^2\}}{z + ( \lambda  + 2)} \Big)$ converts to  $\Big(\frac{( \lambda  + 2) \lvert x+ iy \rvert + \{ ( \lambda  + 2)^2 - (1 - \kappa)^2\}}{\lvert x+ iy \rvert + ( \lambda  + 2)} \Big)$ which is less than $(\lambda  + 2)$.\\

        \section{\textbf{PROOF OF THE LEMMAS}}
    
    To establish Theorem \ref{t 1}, we have taken the help of five lemmas which we derived in this section.

   At first, for the convenience of the readers, we aim to deliver the existence of a fundamental domain of the rank $2$ classical Fuchsian Schottky group in the upper-half plane model. The subsequent lemma will be effective in this direction.

   \begin{lem} \label{l 2.1}
   Let $\Gamma_{S_{2}}$ be a rank $2$ (classical) Fuchsian Schottky group generated by two M\"obius hyperbolic transformations. Suppose, $h_s$ is an element of $\Gamma_{S_{2}}$. Then there exists a fundamental domain $F_{\Gamma_{S_{2}}}$ for $\Gamma_{S_{2}}$ enclosed by four semi-circles with the center on the circle at infinity, where one of the semi-circles separates the fixed points of the transformation $h_s$.
   \end{lem}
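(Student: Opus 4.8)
The plan is to separate the statement into two independent pieces: the existence of the four bounding semicircles, which is essentially a restatement of classicality for a Fuchsian group, and the claim that one of them can be arranged to separate the two fixed points of $h_s$, which I would obtain from a ping-pong argument.

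For the first piece, since $\Gamma_{S_{2}}$ is a classical Fuchsian Schottky group of rank $2$, it admits a free generating set $\{g_1,g_2\}$ and four pairwise disjoint semicircles $C_1,C_1',C_2,C_2'$ centred on the circle at infinity $\mathbb R\cup\{\infty\}$ with $g_i(C_i)=C_i'$; this is precisely the configuration realised in \cite{ShaikhRoy21}, and available for any Fuchsian Schottky group by \cite{Button}. Writing $B_i$, $B_i'$ for the open half-discs that $C_i$, $C_i'$ cut off in $\mathbb H^2$ (the ones not meeting the others), one has $g_i\bigl(B_i\bigr)=\mathbb H^2\setminus\overline{B_i'}$, and the usual Ford/Schottky ping-pong shows that
\[
F_{\Gamma_{S_{2}}}=\mathbb H^2\setminus\bigl(\overline{B_1}\cup\overline{B_1'}\cup\overline{B_2}\cup\overline{B_2'}\bigr)
\]
is a fundamental domain for the action of $\Gamma_{S_{2}}$ on $\mathbb H^2$; restricting from $\widehat{\mathbb C}$ is legitimate because the limit set of a Fuchsian group lies on $\mathbb R\cup\{\infty\}$, so $\mathbb H^2$ sits inside the domain of discontinuity.

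For the second piece I would first treat a cyclically reduced $h_s$ and then reduce the general case to it by a conjugation. Every non-trivial element of a Schottky group is loxodromic and $\Gamma_{S_{2}}$ is Fuchsian, so $h_s$ is hyperbolic with two fixed points $\xi^{+},\xi^{-}\in\mathbb R\cup\{\infty\}$ (the case $h_s=\mathrm{id}$ being vacuous). Write $h_s=u\,w\,u^{-1}$ with $u\in\Gamma_{S_{2}}$ and $w=a_1a_2\cdots a_n$ a cyclically reduced word in $\{g_1^{\pm1},g_2^{\pm1}\}$, and for each letter $a$ let $D_a$ be the half-disc with $a\bigl(\mathbb H^2\setminus\overline{D_{a^{-1}}}\bigr)=D_a$ (so $D_{g_i}=B_i'$ and $D_{g_i^{-1}}=B_i$). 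Because $w$ is reduced, $w\bigl(\mathbb H^2\setminus\overline{D_{a_n^{-1}}}\bigr)\subseteq D_{a_1}$, and because $w$ is \emph{cyclically} reduced, $a_1\ne a_n^{-1}$, so $D_{a_1}\subseteq\mathbb H^2\setminus\overline{D_{a_n^{-1}}}$; hence the iterates $w^{k}\bigl(\mathbb H^2\setminus\overline{D_{a_n^{-1}}}\bigr)$ form a nested decreasing chain shrinking to a point, necessarily the attracting fixed point of $w$, which therefore lies in $\overline{D_{a_1}}$, while the same argument applied to $w^{-1}=a_n^{-1}\cdots a_1^{-1}$ puts the repelling fixed point of $w$ in $\overline{D_{a_n^{-1}}}$. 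As these two closed half-discs are disjoint, the semicircle $\partial D_{a_1}$ separates the fixed points of $w$. Finally, conjugating the whole configuration by $u$ yields a Schottky configuration $\bigl(uC_1,uC_1',uC_2,uC_2';\,ug_1u^{-1},ug_2u^{-1}\bigr)$ for $u\Gamma_{S_{2}}u^{-1}=\Gamma_{S_{2}}$ with fundamental domain $u\bigl(F_{\Gamma_{S_{2}}}\bigr)$ bounded by the four semicircles $uC_i,uC_i'$ (again centred on $\mathbb R\cup\{\infty\}$, after a harmless normalisation avoiding a vertical side); since $u$ carries $\mathrm{fix}(w)$ to $\mathrm{fix}(h_s)$, the bounding semicircle $u\bigl(\partial D_{a_1}\bigr)$ separates $\xi^{+}$ from $\xi^{-}$, and relabelling $u(F_{\Gamma_{S_{2}}})$ as $F_{\Gamma_{S_{2}}}$ gives the lemma.

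The main obstacle I anticipate lies entirely in the first piece: pinning down cleanly that a \emph{classical} Fuchsian Schottky group has a defining system of circles that are genuine geodesics, and that the region exterior to them is a true fundamental domain for the action on $\mathbb H^2$ (rather than only for the action on the domain of discontinuity on $\widehat{\mathbb C}$), together with the bookkeeping needed to keep all four curves honest semicircles centred on $\mathbb R$. Once that is in place, the ping-pong estimates locating $\mathrm{fix}(h_s)$ are routine.
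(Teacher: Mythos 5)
Your proposal is correct, and its first half (a classical configuration of four disjoint geodesic semicircles plus ping-pong gives the exterior region as a fundamental domain) supplies exactly what the paper simply takes for granted from classicality and \cite{ShaikhRoy21}. Where you genuinely diverge is the separation step. The paper argues geometrically: it picks a point $\eta$ on the axis of $h_s$, finds $g_s\in\Gamma_{S_{2}}$ with $g_s(\eta)$ in the fundamental domain $F^*$, and invokes geodesic convexity of $F^*$ to conclude that the axis of $g_s h_s g_s^{-1}$, having a point in $F^*$, cannot have both endpoints inside a single bounding half-disc, so some bounding semicircle separates the fixed points; it then transports the four curves by $g_s^{-1}$. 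You instead conjugate $h_s$ to a cyclically reduced word $w=a_1\cdots a_n$ and run the standard nesting argument to put the attracting fixed point in the half-disc of $a_1$ and the repelling one in that of $a_n^{-1}$, disjoint precisely because $w$ is cyclically reduced, and then transport the configuration by $u$. The two routes are parallel in spirit (cyclic reduction is the combinatorial counterpart of pushing a point of the axis into the fundamental domain, and both finish by conjugating the defining curves), but yours is the more explicit and checkable one: the paper's appeals to the existence of $g_s$ and to geodesic convexity are stated loosely, whereas your ping-pong step needs only disjointness of the closed half-discs plus the fact that fixed points of nontrivial elements are limit points (hence off the defining curves, making the separation strict). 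Your worry about a conjugated side becoming a vertical line is harmless in either treatment, since a vertical line is a geodesic ``centred at $\infty$'', which the lemma's phrasing (centre on the circle at infinity) already permits.
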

 \begin{center}
              \hspace*{-.1cm}
              \includegraphics[width=17cm, height=5.9cm]{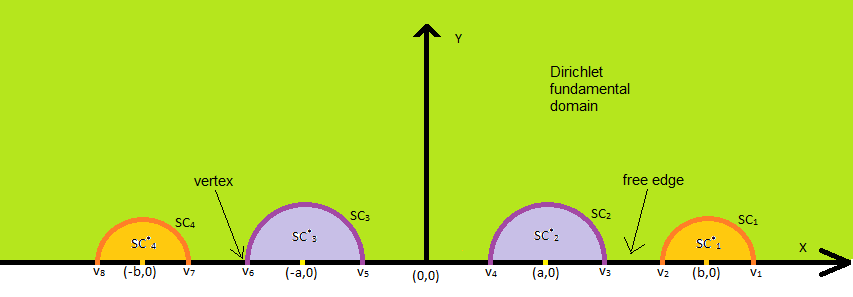} 
              $Figure: 3$
              \end{center}

   \begin{proof}
   Suppose $F_{\Gamma_{S_{2}}}$ (see, the light green color region in Figure $3$) is the fundamental domain  of $\Gamma_{S_{2}}$ encircled by four semi-circles $SC_{1}, SC_{2}, SC_{3}$, and $SC_{4}$, ($SC_{4}$ is paired with $SC_{1}$, whereas $SC_{3}$ twins with $SC_{2}$) containing four free edges. Note that, all the vertices (viz., $v_1$, $v_2$, $v_3$, $v_4$, $v_5$, $v_6$, $v_7$, and $v_8$) of the polygon that represents the Dirichlet fundamental region are all improper vertices (see, \cite{Beardon} for the definition of improper vertex). Observe that, here, all vertices are infinite vertices of the overhead polygon. Let, $\{\infty \} \in \Gamma_{S_{2}}$, as an ordinary point of the group. Obviously, $F_{\Gamma_{S_{2}}}$ produces an infinite area unbounded region in the upper-half plane. In \cite{ShaikhRoy21}, we have seen that the group $\Gamma_{S_{2}}$ is a discrete subgroup of $PSL(2,\mathbb{R})$ acting in the hyperbolic plane $\mathbb{H}^2$ $=\{(x,y): x \in \mathbb{R},$ $y>0\}$. Let, ${SC}^*_1$, ${SC}^*_2$,  ${SC}^*_3$, and ${SC}^*_4$ be the $4$ half disks (all are semi-circular in shape) in $\mathbb{H}^2 \cup \{\infty \}$ centered on the extended real numbers with boundary $SC_{1}, SC_{2}$, $SC_{3}$, and $SC_{4}$ respectively perpendicular to $\mathbb{R} \cup \{\infty\}$. Then the unbounded region ${F}^*_{\Gamma_{S_{2}}}$ of $\mathbb{H}^2$ is enclosed by four semi-circular disks ${SC}^*_1$, ${SC}^*_2$,  ${SC}^*_3$, and ${SC}^*_4$ is a fundamental domain for $\Gamma_{S_{2}}$ in $\mathbb{H}^2 \cup \{\infty \}$. Since ${h}_s$ is hyperbolic, we can consider a point `${\eta}$' on the axis of the element ${h}_s$. So, there exists an element ${g}_s \in {\Gamma}_{{S_2}}$ such that ${g}_s ({\eta}) \in {F}^*_{\Gamma_{S_{2}}}$. Note that, in the hyperbolic plane the region ${F}^*_{\Gamma_{S_{2}}}$ is geodesically convex. Hence, one of the semi-circular half disks ${SC}^*_1$, ${SC}^*_2$,  ${SC}^*_3$, and ${SC}^*_4$, say ${SC}^*_1$, separates the fixed points of $g_s h_s (g_s)^{-1}$. Therefore, the boundary of the half disk ${SC}^*_1$, i.e., the semi-circular curve ${SC}_1$ will also separate these fixed points. So the required fundamental domain of $\Gamma_{S_{2}}$ is the zone surrounded by $(g_s)^{-1}(SC_{1})$, $(g_s)^{-1}(SC_{2})$, $(g_s)^{-1} (SC_{3})$, and $(g_s)^{-1}(SC_{4})$ which is basically the infinite area region $F_{{\Gamma}_{S_2}}$ in the upper-half plane model. This completes the proof.
   \end{proof} 
   To prove the remaining four lemmas, our objective is now to set up the background notations which will be operated throughout the rest of this section. Our goal is to prove the required Theorem \ref{t 1} by the method of contradiction. Suppose that for $\kappa < 10^{-11}$, the Fuchsian Schottky group $\Gamma^{N}_{S_{\kappa}}$ is classical. Since we have already shown the existence of a fundamental domain for the rank $2$ classical Fuchsian Schottky group in the above lemma, let us assume that $F_{\Gamma^{N}_{S_{\kappa}}}$ be that fundamental domain for the group $\Gamma^{N}_{S_{\kappa}}$ sketched by four semi-circles $SC_{1,\kappa}$, $SC_{2,\kappa}$, $SC_{3,\kappa}$, and $SC_{4,\kappa}$, one of which, say $SC_{4,\kappa}$, separates the fixed points $\tau$ and $\{-\tau\}$ $\textbf{(}$where, $ \tau = \sqrt{(\lambda +1)(\lambda +3)}\textbf{)}$ of the transformation ${h^{**}}_{(4,1)_{\kappa}}$, and so does another boundary component of $F_{\Gamma^{N}_{S_{\kappa}}}$ other than the semi-circle $SC_{4,\kappa}$. Consider the set ${SC}^*_\kappa = \{SC^1_{\kappa}, SC^2_{\kappa}, SC^3_{\kappa}, ..., SC^{\textbf{I}^+}_{\kappa}\}$ which is the complete list of the images of the semi-circles $SC_{1,\kappa}$, $SC_{2,\kappa}$, $SC_{3,\kappa}$, and $SC_{4,\kappa}$ applying by the group $\Gamma^{N}_{S_{\kappa}}$ that are nested and intersect the real interval $(-\tau,$  $\lambda + 3)$ once, satisfying one of the following cases: \\ $(i)$ Each semi-circle $SC^j_{\kappa}$ segregates the fixed points $ \tau$ and $\{- \tau\}$ of the transformation ${h^{**}}_{(4,1)_{\kappa}}$, \\ $(ii)$ Each semi-circle $SC^{j+1}_{\kappa}$ keeps apart the semi-circle $SC^j_{\kappa}$ from the fixed point $\{-\tau \}$, and \\ $(iii)$ The semi-circles $SC^1_{\kappa},$ $SC^j_{\kappa}$, $j=2,3, ..., (\textbf{I}^+ -1)$ and $SC^{\textbf{I}^+}_{\kappa}$ meet the intervals $\textbf{[}\tau,$  $(\lambda + 3) \textbf{)},$ $\textbf{(}(\lambda + 1),$  $\tau \textbf{)}$, and $\textbf{(}-\tau,$  $(\lambda + 1) \textbf{]}$ respectively, where $\textbf{I}^+$ denotes the positive natural number.
   
    Observe that, if two semi-circles in the set ${SC}^*_\kappa$ (say, ${SC}^{m_1}$ and ${SC}^{m_2}$, where $m_1,m_2 \in \textbf{I}^+$) are not separated by other semi-circles in the set ${SC}^*_\kappa$, these two semi-circles occupy the boundary of $F_{\Gamma^{N}_{S_{\kappa}}}$. Now, the semi-circles $SC^j_{\kappa}$ and $SC^{j+1}_{\kappa}$ together with any two semi-circles in the family of curves $\{q(SC_{i,{\kappa}}): q\in \Gamma^{N}_{S_{\kappa}}, i=1,2,3,4\}$ surround a fundamental domain for the group $\Gamma^{N}_{S_{\kappa}}$. Suppose that $Y_{jk{\kappa}} = \{SC^j_{\kappa} \cap i^{k-1} \mathbb{R}^+\}$, where $k=1,2,3,$ and $\mathbb{R}^+ = \{x \in \mathbb{R}: x>0\}$. In the hyperbolic plane, we set the largest distance between two consecutive semi-circles in the set ${SC}^*_\kappa$ as $Z_{j{\kappa}}=\max_k \{\lvert Y_{jk{\kappa}} - Y_{(j+1)k{\kappa}} \rvert\}$. Assume that $\Omega(\Gamma^{N}_{S_{\kappa}})$ is the domain of discontinuity of the group $\Gamma^{N}_{S_{\kappa}}$.
   
  After organizing the background notations in the preceding, we aim now to establish Lemma \ref{l 2.3}. But before that it is essential to prove the ensuing lemma. In particular, we have deduced the following Lemma \ref{l 2.2} under certain supposition. The succeeding lemma is related to the lengths of components of the domain of discontinuity for the group $\Gamma^{N}_{S_{\kappa}}$ that intersect the boundary of the upper-half plane.
   
   \begin{lem} \label{l 2.2}
      The length of each component of $\{\Omega(\Gamma^{N}_{S_{\kappa}}) \cap \mathbb{R}\}$ which meets the region made up of the union of intervals given as $[-(\lambda + 3), -(\lambda + 1)] \cup [(\lambda + 1), (\lambda + 3)] $ is less than the subsequent quantity \begin{eqnarray}
       \Big\{\frac{2}{(4 \lambda^2 + 9\lambda + 3)} \times \sqrt{2.01} \times \sqrt{\kappa} \times \sqrt{6 \lambda^4 + 16 \lambda^3 + 3 \lambda^2 - 16 \lambda -8}\Big\}, \notag
      \end{eqnarray} where $\lambda = \lvert x \rvert > 1$ and $\lvert x \rvert$ denotes the modulus value of the number $x$ .
      \end{lem}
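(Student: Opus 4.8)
I would work inside the standing set--up made just before the statement (so $\Gamma^{N}_{S_{\kappa}}$ is assumed classical, and the nested family $SC^{*}_{\kappa}$ and the domain $F_{\Gamma^{N}_{S_{\kappa}}}$ are available), and I would first record the reflection $R:z\mapsto -z$. A one--line matrix computation gives $R\,h^{*}_{(3,2)_{\kappa}}\,R=\big(h^{*}_{(3,2)_{\kappa}}\big)^{-1}$ and $R\,h^{**}_{(4,1)_{\kappa}}\,R=\big(h^{**}_{(4,1)_{\kappa}}\big)^{-1}$, so $R$ normalises $\Gamma^{N}_{S_{\kappa}}$ and preserves $\Lambda(\Gamma^{N}_{S_{\kappa}})$ and $\Omega(\Gamma^{N}_{S_{\kappa}})$ setwise. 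Thus $[-(\lambda+3),-(\lambda+1)]$ is the $R$--image of $[(\lambda+1),(\lambda+3)]$, and it is enough to bound the length of a component of $\{\Omega(\Gamma^{N}_{S_{\kappa}})\cap\mathbb{R}\}$ meeting (indeed lying in) the positive interval.

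Next I would locate that component. After clearing the factor $(1-\kappa)^{-1}$ the second generator is
\[
h^{**}_{(4,1)_{\kappa}}(z)=\frac{(\lambda+2)z+(\lambda+2)^{2}-(1-\kappa)^{2}}{z+(\lambda+2)},\qquad \big(h^{**}_{(4,1)_{\kappa}}\big)'(z)=\frac{(1-\kappa)^{2}}{\big(z+(\lambda+2)\big)^{2}},
\]
and $SC_{1,\kappa}\cap\mathbb{R}=(\lambda+1+\kappa,\ \lambda+3-\kappa)$ coincides with $[(\lambda+1),(\lambda+3)]$ up to endpoint shifts of size $\kappa$. Since the pole $-(\lambda+2)$ of $h^{**}_{(4,1)_{\kappa}}$ is the centre of $SC_{4,\kappa}$, this map carries the exterior of the half--disk of $SC_{4,\kappa}$ onto the half--disk of $SC_{1,\kappa}$; hence the component in question is the $h^{**}_{(4,1)_{\kappa}}$--image of a component of $\{\Omega(\Gamma^{N}_{S_{\kappa}})\cap\mathbb{R}\}$ lying outside $SC_{4,\kappa}$, and by the case description $(i)$--$(iii)$ of the chain $SC^{*}_{\kappa}$ this latter component is one of the free intervals wedged between two consecutive circles $SC^{j}_{\kappa}$, $SC^{j+1}_{\kappa}$ of the nested family --- precisely the piece of the domain of discontinuity measured along $\mathbb{R}$ governing $Z_{j\kappa}$ in the case $k=1$. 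Write $J$ for this preimage piece.

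The length bound I would then extract from the mean value theorem,
\[
\big|\,h^{**}_{(4,1)_{\kappa}}(J)\,\big|\ \le\ |J|\cdot\sup_{z\in J}\big|\big(h^{**}_{(4,1)_{\kappa}}\big)'(z)\big|\ =\ |J|\cdot\frac{(1-\kappa)^{2}}{\operatorname{dist}\big(-(\lambda+2),\,J\big)^{2}}.
\]
The first factor $|J|$ is computed from the endpoints of $J$, which are the real intersection points of the two circles bounding it; substituting an equation $|z-c|=1-\kappa$ against the relevant companion curve and solving the ensuing quadratic in the real coordinate reproduces the radical $\sqrt{\,6\lambda^{4}+16\lambda^{3}+3\lambda^{2}-16\lambda-8\,}$, while the part $1-(1-\kappa)^{2}=(2-\kappa)\kappa$ of the discriminant supplies the factor $\sqrt{\kappa}$ and, after the crude estimate $2-\kappa<2.01$ (valid because $\kappa<10^{-11}$), the numerical constant $\sqrt{2.01}$. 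For the second factor I would bound $\operatorname{dist}(-(\lambda+2),J)$ from below and $(1-\kappa)^{2}$ from above by $1$, which with $\lambda=|x|>1$ produces the denominator $4\lambda^{2}+9\lambda+3$ together with the leading $2$. Multiplying gives exactly the displayed quantity for the component meeting $[(\lambda+1),(\lambda+3)]$, and applying $R$ transfers it to the component meeting $[-(\lambda+3),-(\lambda+1)]$, completing the lemma.

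The part I expect to cost the most is not any single computation but the intermediate bookkeeping: one has to check that \emph{among all} components of $\{\Omega(\Gamma^{N}_{S_{\kappa}})\cap\mathbb{R}\}$ meeting the stated region the one produced above is extremal (there are infinitely many, nested to every order, inside $(\lambda+1,\lambda+3)$), and one has to keep the several $\kappa$--dependent ingredients --- the two circle footprints, the lower bound for $\operatorname{dist}(-(\lambda+2),J)$, and the powers of $1-\kappa$ --- aligned tightly enough that the final answer is the exact constant $\frac{2}{4\lambda^{2}+9\lambda+3}$ times the exact radical, rather than a bare $O(\sqrt{\kappa})$ estimate. The hypotheses $\lambda>1$ and $\kappa<10^{-11}$ are used only for these lower bounds and for the harmless slack $2-\kappa<2.01$; nothing sharper is needed for this lemma in isolation.
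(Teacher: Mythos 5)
Your proposal misses the mechanism that actually produces the stated bound. In the paper, the governing component $\psi_1$ is the interval bounded by the two fixed points of the commutator ${h^*}_{(3,2)_{\kappa}}{h^{**}}_{(4,1)_{\kappa}}{h^*}_{(3,2)_{\kappa}}({h^{**}}_{(4,1)_{\kappa}})^{-1}$; the fixed-point quadratic has discriminant $(9x^4+48x^3+91x^2+72x+20)+2\kappa(6x^4+16x^3+3x^2-16x-8)+O(\kappa^2)$, and the entire point of the peculiar choice $\lambda=\lvert -2\rvert$ or $\lvert -\tfrac{5}{3}\rvert$ (moduli of roots of $9x^4+48x^3+91x^2+72x+20$) is to kill the $\kappa$-free term, so that the separation of the fixed points -- hence $l(\psi_1)$ -- is of order $\sqrt{\kappa}$ with exactly the radical $\sqrt{6\lambda^4+16\lambda^3+3\lambda^2-16\lambda-8}$, while the denominator $4\lambda^2+9\lambda+3$ comes from the fixed-point formula's denominator $4x^2+9x+4-2x\kappa+x\kappa^2$. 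You attribute the radical to real intersection points of the defining semi-circles and the denominator to a distance-to-pole estimate; neither is where these quantities come from, and no computation with the circle footprints ($\lambda\pm1\mp\kappa$, etc.) can produce them: the footprints only see gaps of size $O(\kappa)$ and say nothing about components of $\Omega(\Gamma^{N}_{S_{\kappa}})\cap\mathbb{R}$, which are bounded by limit points, not by the circles. Your closing remark that $\lambda>1$ is ``used only for lower bounds'' confirms the miss -- without the special algebraic choice of $\lambda$ the $\sqrt{\kappa}$-decay of the relevant component is not available at all, so no choice of constants can rescue the route you sketch.

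Second, the step you defer as ``intermediate bookkeeping'' is the bulk of the paper's proof: every component meeting $[-(\lambda+3),-(\lambda+1)]\cup[(\lambda+1),(\lambda+3)]$ is written as $\beta_{2t}(\Psi)$ with $\Psi\in\{\psi_1,\psi_2,\psi_3,\psi_4\}$, and one proves $\lvert\beta_{2t}'(x)\rvert<1$ on $\Psi$ by induction on $t$, with a case analysis on the signs of the exponents and on whether the relevant points lie inside or outside $SC_{3,\kappa}$ and $SC_{4,\kappa}$. Your single mean-value estimate for ${h^{**}}_{(4,1)_{\kappa}}$ does not substitute for this: for an interval $J$ just outside $SC_{4,\kappa}$ the derivative bound is essentially $1$, so no contraction is gained, and the identification of $J$ as a ``free interval wedged between two consecutive circles of the nested family'' is also off -- the family ${SC}^*_\kappa$ is part of the reductio set-up exploited in Lemma \ref{l 2.4} and Lemma \ref{l 2.5}, not of this lemma, and components inside the half-disk of $SC_{1,\kappa}$ arise at every depth of the Cantor structure, not only from gaps between consecutive nested circles. (Your observation that conjugation by $z\mapsto-\bar z$ inverts both generators is correct and harmless, but it plays no role in the paper's argument and does not repair the two gaps above.)
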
  
      
      \begin{proof}
      First, we consider two functions defined by
      $${h^*}_{(3,2)_{\kappa}} (z) = \frac{ x (1-\kappa)^{-1} z + (1-\kappa)\{ x ^2 (1-\kappa)^{-2} -1\}}{(1-\kappa)^{-1} z +  x  (1-\kappa)^{-1}}$$
         and
         $${h^{**}}_{(4,1)_{\kappa}} (z) = \frac{( x + 2) (1-\kappa)^{-1} z + (1-\kappa)\{( x + 2)^2 (1-\kappa)^{-2} -1\}}{(1-\kappa)^{-1} z + ( x + 2) (1-\kappa)^{-1}}$$
         where $x$ is a positive real number greater than $1$.
         
         Let, $\{{h^*}_{(3,2)_{\kappa}} {h^{**}}_{(4,1)_{\kappa}} {h^*}_{(3,2)_{\kappa}} ({h^{**}}_{(4,1)_{\kappa}})^{-1}\}(z)$ $=$ $D(z)$ (say).\\ This gives the fixed points as ${z_i}$, for $i=1,2,$ where
        $${z_i} = \frac{1}{(4x^2 + 9x + 4 - 2x \kappa +x \kappa^2 )} \times [2 \{ 2x^3 + 6x^2 + 5x + 1 + 2(x + 1)\kappa - (x + 1)\kappa^2\} \pm  \textbf{\{} (9x^4 + 48 x^3 + 91 x^2 $$ 
           $$ + 72x + 20) + 2 \kappa (6 x^4 + 16 x^3 + 3 x^2 - 16x -8) + \kappa^2(-2 x^4 - 16 x^3 + x^2 + 48x + 24)$$ 
           $$+4 \kappa^3 (-x^4 + x^2 - 8x -4) +  \kappa^4 (x^4 - 11 x^2 + 8x + 4) + 6 \kappa^5 (x^2) - 2 \kappa^6 (x^2) \textbf{\}}^{\frac{1}{2}}] $$ 
         Without loss of generality, we assume that, $9x^4 + 48 x^3 + 91 x^2 + 72x + 20 = 0$. \\ This equation yields the subsequent roots. 
         
          $x = -2, -\frac{5}{3}, -1,$ and $-\frac{2}{3}$. \\ Now, let $ \lambda = \lvert x  \rvert,$ where $\lvert x  \rvert$ denotes the modulus value of the numbers  $-2$ and  $-\frac{5}{3}$, i.e., $\lambda =2$ and $1.6666666667$ (approx.) $(>1)$.
        
        We have supposed $x$ to be a positive real number greater than $1$ but the equation $9x^4 + 48 x^3 + 91 x^2 + 72x + 20 = 0$ could not give such a real number as a root. On the other hand, if we want to use the technique from Yamamoto's paper \cite{Yamamoto}, we have to take $9x^4 + 48 x^3 + 91 x^2 + 72x + 20$ equal to zero. Instead, we could not find our required sufficiently small positive delta. Consequently, for simplicity, we have taken the modulus values $(>1)$ of the roots of that equation without loss of generality. 
         
        Let, ${\psi_1}$ be the component of $\{\Omega(\Gamma^{N}_{S_{\kappa}}) \cap \mathbb{R}\}$ bounded by the fixed points of the transformation  $\{{h^*}_{(3,2)_{\kappa}} {h^{**}}_{(4,1)_{\kappa}} {h^*}_{(3,2)_{\kappa}} ({h^{**}}_{(4,1)_{\kappa}})^{-1}\}$. All components of $\{ \Omega(\Gamma_{S_{\kappa}}^{N}) \cap \mathbb{R}\}$ are equivalent to one another for the group $\Gamma^{N}_{S_{\kappa}}$. We set ${\psi}_2 = ({h^*}_{(3,2)_{\kappa}})^{-1}({\psi}_1)$, ${\psi}_3 = ({h^{**}}_{(4,1)_{\kappa}})^{-1}({\psi}_2)$, and ${\psi}_4 = ({h^{**}}_{(4,1)_{\kappa}})^{-1}({\psi}_1)$. Let, ${\Psi}$ be a component of $\{\Omega(\Gamma_{S_{\kappa}}^{N}) \cap \mathbb{R}\}$ included in one of the semi-circles $SC_{1,\kappa}$ and $SC_{3,\kappa}$ or $SC_{2, \kappa}$ and $SC_{4,{\kappa}}$. 
        
        So, ${\Psi}$ can be written as in the subsequent way. 
        \begin{eqnarray}
       \beta_{2t}({\Psi}) = \{({h^*}_{(3,2)_{\kappa}})^{s_{2t}}({h^{**}}_{(4,1)_{\kappa}})^{s_{2t-1}}.({h^*}_{(3,2)_{\kappa}})^{s_{2t-1}}({h^{**}}_{(4,1)_{\kappa}})^{s_{2t-2}} ... ({h^*}_{(3,2)_{\kappa}})^{s_3} ({h^{**}}_{(4,1)_{\kappa}})^{s_2}.({h^*}_{(3,2)_{\kappa}})^{s_2} \notag\\
                ({h^{**}}_{(4,1)_{\kappa}})^{s_1}\}({\Psi}) \notag,
        \end{eqnarray}   
         where, ${\Psi}$ is either ${\psi_1}, {\psi_2}, {\psi_3}$, or ${\psi_4}$ and $s_{2t} s_{2t-1} ... s_2 \ne 0$. \\ We denote $l({\Psi})$ to be the length of ${\Psi}$. 
        \\Then we obtain \begin{eqnarray} \text{ $l({\Psi}$}) &<& \Big\{ \frac{1}{(4x^2 + 9x + 4 - 2x \kappa +x \kappa^2 )} \times \sqrt{2.01} \times \sqrt{\kappa} \times \sqrt{6 \lambda^4 + 16 \lambda^3 + 3 \lambda^2 - 16 \lambda -8}\Big\}\notag \\ 
        &<& \Big\{\frac{2}{(4 \lambda^2 + 9\lambda + 3)} \times \sqrt{2.01} \times \sqrt{\kappa} \times \sqrt{6 \lambda^4 + 16 \lambda^3 + 3 \lambda^2 - 16 \lambda -8}\Big\}\notag.
        \end{eqnarray} Now, we aim to show that the length of any image of ${\Psi}$ under $\beta_{2t}$ is less than the length of ${\Psi}$, which we have established by proving that $\lvert \beta'_{2t} (x) \rvert < 1,$ $\forall$ $x$ $\in$ ${\Psi}$. In fact, we have succeeded by utilizing the method of induction concerning $t$.
        
        First of all, for $t=1$, we get $\beta_2 = \{({h^*}_{(3,2)_{\kappa}})^{s_2} ({h^{**}}_{(4,1)_{\kappa}})^{s_1}\}$.\\  In the following, we have investigated three cases for the value of $s_1$. \\ (i) Case I, when $s_1 = 0,$ (ii) Case II, for $s_1 <0,$ and (iii) Case III, when $s_1 >0$. 
        
         Clearly, \begin{eqnarray}
         \lvert ({h^*}_{(3,2)_{\kappa}})' (x) \rvert &=& \Big \lvert \frac{(1-\kappa)^2}{(x+ \lambda)^2} \Big \rvert \notag, \\  
         \text{  and  }
        \lvert ({h^{**}}_{(4,1)_{\kappa}})' (x)\rvert &=& \Big \lvert \frac{(1-\kappa)^2}{\{x+(\lambda+2)\}^2} \Big \rvert. \notag
         \end{eqnarray} Inside the semi-circle $SC_{3, \kappa}$, we get  $\lvert ({h^*}_{(3,2)_{\kappa}})' (x) \rvert > 1$ since $\lvert x + \lambda \rvert < 1 - \kappa$ and outside the semi-circle $SC_{3, \kappa}$, we obtain  $\lvert ({h^*}_{(3,2)_{\kappa}})' (x) \rvert < 1$ since $\lvert x + \lambda \rvert > 1 - \kappa$. However, inside the semi-circle $SC_{4,\kappa}$, we have $\lvert ({h^{**}}_{(4,1)_{\kappa}})' (x)\rvert > 1$ since $\lvert x + (\lambda+2) \rvert < 1 - \kappa$ and outside the semi-circle $SC_{4,\kappa}$, we get $\lvert ({h^{**}}_{(4,1)_{\kappa}})' (x)\rvert < 1$ since $\lvert x + (\lambda+2) \rvert > 1 - \kappa$. \\ $(i)$ Case I, for $s_1 =0:$ In the first case, our goal is to prove that $\lvert \{({h^*}_{(3,2)_{\kappa}})^{s_2}\}' (x)  \rvert < 1$ $\forall$ $x$ $\in$ ${\Psi}$ by using the method of induction on $s_2$. Now if we observe the case $s_2 > 0$, for $s_2 = 1$ the induction becomes trivial. For $s_2 = 2$, we obtain the following. \begin{eqnarray}
           \lvert \{({h^*}_{(3,2)_{\kappa}})^2\}'(x)\rvert &=& \frac{d}{dx} \Big[ \Big(\frac{ \lambda x + \{ \lambda ^2 - (1-\kappa)^{2}\}}{x +  \lambda}\Big)^2 \Big] \notag \\
           &=& \frac{d}{dx} \Big[\Big(\lambda - \frac{(1-\kappa)^2}{x+\lambda}\Big)^2 \Big], \text{ since }  x \ne - \lambda \notag \\
            &=& 2 \Big(\lambda - \frac{(1-\kappa)^2}{x+\lambda}\Big)\Big(\frac{1-\kappa}{x+\lambda} \Big)^2 \notag \\
           &<& 2 \Big(\lambda - \frac{(1-\kappa)^2}{x+\lambda}\Big),\notag
           \end{eqnarray} since outside the semi-circle $SC_{3, \kappa}$, we have $ (1 - \kappa) < ( x + \lambda )$. 
           \\ Now, if we take \begin{eqnarray}
           x &<& \frac{1}{4}\{(-2\lambda + 1) - \sqrt{(2\lambda + 1)^2 + 16 (1-\kappa)^2}\}, \notag \\
           \text{ and } x &>& \frac{1}{4}\{(-2\lambda + 1) + \sqrt{(2\lambda + 1)^2 + 16 (1-\kappa)^2}\}, \notag
           \end{eqnarray} we get \begin{eqnarray}
           \lvert \{({h^*}_{(3,2)_{\kappa}})^2\}'(x)\rvert < 1.\notag
           \end{eqnarray} Note that, if we consider outside the restriction of $x$, i.e., \begin{eqnarray}
           \frac{1}{4}\{(-2\lambda + 1) - \sqrt{(2\lambda + 1)^2 + 16 (1-\kappa)^2}\} < x < \frac{1}{4}\{(-2\lambda + 1) + \sqrt{(2\lambda + 1)^2 + 16 (1-\kappa)^2}\}, \notag
           \end{eqnarray} the position of $\psi_1$ will be on the right of this domain whereas $\psi_2$ situated to the left of that region, and $\psi_3$ is located to the right of this zone. Further, for some very small $\kappa$, $\psi_4$ is also outside that range. Hence, we obtain $\forall$ $x$ $\in$ $\Psi$, $\lvert \{({h^*}_{(3,2)_{\kappa}})^{s_2}\}' (x)  \rvert < 1$. Now, our goal is to prove that the condition $\lvert \{({h^*}_{(3,2)_{\kappa}})^{s_2}\}' (x)  \rvert < 1$ is true for $s_2 = n+1$ when it happens for $s_2 = n$. 
           
           This can be easily seen from the following argument.
           \begin{eqnarray}
           \lvert \{  ({h^*}_{(3,2)_{\kappa}})^{n+1} \}' (x) \rvert &=& \Big \lvert \frac{d}{dx} \{ ( {h^*}_{(3,2)_{\kappa}} )^{n+1}  (x) \} \Big \lvert \notag \\ &=&\Big \lvert \frac{d\{({h^*}_{(3,2)_{\kappa}})^{n+1} (x)\}}{d\{({h^*}_{(3,2)_{\kappa}})^n (x)\}} \Big \rvert \times \Big \lvert \frac{d\{({h^*}_{(3,2)_{\kappa}})^{n} (x)\}}{dx} \Big \rvert \notag \\
           &<& \Big \lvert \frac{d\{({h^*}_{(3,2)_{\kappa}})^{n+1} (x)\}}{d\{({h^*}_{(3,2)_{\kappa}})^n (x)\}} \Big \rvert, \text{ from our assumption } \Big \lvert \frac{d\{({h^*}_{(3,2)_{\kappa}})^{n} (x)\}}{dx} \Big \rvert < 1 \notag \\
           &=& \Big \lvert \frac{d\{({h^*}_{(3,2)_{\kappa}})(x)\}}{dx} \Big \rvert, \text{ replacing x by } ({h^*}_{(3,2)_{\kappa}})^{n} (x) \notag \\
           &=& \Big \lvert \frac{(1-\kappa)^2}{(x + \lambda)^2} \Big \rvert \notag \\
           &<& \frac{1}{\lvert x + \lambda \rvert^2}\notag \\ 
           &=& \frac{1}{\lvert( {h^*}_{(3,2)_{\kappa}} )^{n}  (x) + \lambda \rvert^2}\notag.   
           \end{eqnarray}
           Now, if we take the modulus values of the coordinates that are coming from $({h^*}_{(3,2)_{\kappa}} )^{n}  (x)$ when $x \in \psi_i$, for $i = 1, 2, 3,$ and $4$, which run inside the semi-circle $SC_{2, \kappa}$ and utilize that modulus values in $\frac{1}{\lvert( {h^*}_{(3,2)_{\kappa}} )^{n}  (x) + \lambda \rvert^2}$, we obtain 
           \begin{eqnarray}
           \lvert \{  ({h^*}_{(3,2)_{\kappa}})^{n+1} \}' (x) \rvert < 1 \notag.
           \end{eqnarray} 
           
            Hence, we reach the required one.

           The situation is very similar for the case when $s_2 < 0$. First, we show it for $s_2 = -2$, after that, our target will be to prove that assuming $\lvert \{({h^*}_{(3,2)_{\kappa}})^{s_2}\}' (x)  \rvert < 1$ is true for $s_2 = n$ (where, $n<0)$, then it is also true for $s_2 = n-1$. In this manner for $s_2 < 0$ one can follow the exact method as we have discussed in the previous case. Therefore, Case I is done. \\ (ii) Case II, when $s_1 < 0:$ We have established this case by using a similar type of induction method as in the preceding one, where one can write the derivative of $\beta_2$ in the ensuing way. 
           
           \begin{eqnarray}
           \lvert (\beta_2)'(x) \rvert &=& \lvert \{ ({h^*}_{(3,2)_{\kappa}})^{s_2} ({h^{**}}_{(4,1)_{\kappa}})^{s_1}\}' (x) \rvert \notag \\
           &=& \Big \lvert \frac{d}{dx} \{({h^*}_{(3,2)_{\kappa}})^{s_2} ({h^{**}}_{(4,1)_{\kappa}})^{s_1} (x) \} \Big \rvert \notag \\
           &=& \Big \lvert \frac{d\{({h^*}_{(3,2)_{\kappa}})^{s_2} ({h^{**}}_{(4,1)_{\kappa}})^{s_1} (x) \}}{d\{({h^{**}}_{(4,1)_{\kappa}})^{s_1} (x) \}} \Big \rvert \times \Big \lvert \frac{d\{({h^{**}}_{(4,1)_{\kappa}})^{s_1} (x) \}}{dx} \Big \rvert \notag \\
           &<& \frac{1}{\lambda+2} \times \Big \lvert \frac{d\{({h^*}_{(3,2)_{\kappa}})^{s_2} ({h^{**}}_{(4,1)_{\kappa}})^{s_1}(x)\}}{d\{({h^{**}}_{(4,1)_{\kappa}})^{s_1}(x)\}}  \Big \rvert \notag \\
           &<& \Big \lvert \frac{d\{({h^*}_{(3,2)_{\kappa}})^{s_2} ({h^{**}}_{(4,1)_{\kappa}})^{s_1} (x) \}}{d\{({h^{**}}_{(4,1)_{\kappa}})^{s_1} (x) \}} \Big \rvert \notag \\
           &=& \Big \lvert \frac{d\{({h^*}_{(3,2)_{\kappa}})^{s_2} (v)\}}{dv}  \Big \rvert \notag, \text{ replacing } v \text{ by } \{({h^{**}}_{(4,1)_{\kappa}})^{s_1}(x)\}\\
           &=&  \frac{\lvert 1-\kappa \rvert^2}{\lvert v+ \lambda \rvert^2}  \notag \\
           &<&  \frac{1}{\lvert v+ \lambda \rvert^2}  \notag \\
           &<& 1, \notag 
           \end{eqnarray}
            by using Case I when  $s_1 = 0$  and applying all the regions $(\text{for } s_1 < 0)$ that are outside the semi-circle $SC_{3, \kappa}$.

            Hence, the case for $s_1 <0$ follows. 
         \\ $(iii)$ Case III, for $s_1>0:$ For the first step of the induction concerning $t$, we aim to derive that for $s_1 >0$, the image of ${\Psi}$ under the transformations $\{({h^*}_{(3,2)_{\kappa}})^{s_2} ({h^{**}}_{(4,1)_{\kappa}})^{s_1} \}$ is less than ${\Psi}$. Now, for the subcases, we first examine the case for $s_2 > 0$. Observe that, if $s_1 = 1$ then from our assumption, we get that the transformation $({h^{**}}_{(4,1)_{\kappa}})$ sends ${\psi}_3$ to ${\psi}_2$ and ${\psi}_4$ to ${\psi}_1$, and also we already have $\lvert \{({h^*}_{(3,2)_{\kappa}})^{s_2}\}'(x)\rvert < 1$ $\forall$ $x$ $\in$ ${\Psi}$. Further, when $s_1 >1$, the images of ${\psi}_1$ and ${\psi}_2$ under the transformation $({h^{**}}_{(4,1)_{\kappa}})^{s_1}$ are outside the semi-circle $SC_{3, \kappa}$. Again, outside the semi-circle $SC_{3, \kappa}$, the M\"obius transformation $({h^*}_{(3,2)_{\kappa}})$ is contracting, since we get \begin{eqnarray}
        \lvert ({h^*}_{(3,2)_{\kappa}})' (x) \rvert &=& \Big \lvert \frac{(1-\kappa)^2}{(x+ \lambda)^2} \Big \rvert \notag \\ 
        &<& 1, \notag
        \end{eqnarray}  for $\lvert x + \lambda \rvert > (1 -\kappa)$, the points that are outside the semi-circle $SC_{3, \kappa}$. Therefore, when $s_2 =1$, the transformation $\{({h^*}_{(3,2)_{\kappa}})^{s_2} ({h^{**}}_{(4,1)_{\kappa}})^{s_1} \}$ provides the longest interval. So, our purpose is now to deduce that the transformation $\{({h^*}_{(3,2)_{\kappa}})^{s_2} ({h^{**}}_{(4,1)_{\kappa}})^{s_1} \}({\Psi})$ is shorter than ${\Psi}$, $\forall$ $s_1$. To establish that we aim to show that $B_1$ and $B_2$ are less than $\lvert {\psi}_{1a} - {\psi}_{1b} \rvert$, where $B_1$ and $B_2$ are described in the following.  
        \begin{eqnarray}
        B_1 &=& \Big \lvert ({h^*}_{(3,2)_{\kappa}}) \Big \{ \Big(\frac{(\lambda+2){\psi}_{1a} + A}{{\psi}_{1a} + (\lambda+2)} \Big)^{s_1} \Big\} - ({h^*}_{(3,2)_{\kappa}}) \Big \{ \Big(\frac{(\lambda+2){\psi}_{1b} + A}{{\psi}_{1b} + (\lambda+2)} \Big)^{s_1} \Big \} \Big \rvert \notag, \\ \text{  and  }
        B_2 &=& \Big \lvert ({h^*}_{(3,2)_{\kappa}}) \Big \{ \Big(\frac{(\lambda+2){\psi}_{2a} + A}{{\psi}_{2a} + (\lambda+2)} \Big)^{s_1} \Big\} - ({h^*}_{(3,2)_{\kappa}}) \Big \{ \Big(\frac{(\lambda+2){\psi}_{2b} + A}{{\psi}_{2b} + (\lambda+2)} \Big)^{s_1} \Big \} \Big \rvert \notag                
        \end{eqnarray}
          where, $A = \{(\lambda+2)^2 - (1-\kappa)^2\}$ and ${\psi}_{1a}, {\psi}_{1b}, {\psi}_{2a},$ and ${\psi}_{2b}$ are the end points of ${\psi}_1$ and ${\psi}_2$. \\ Now \begin{eqnarray}
          B_1 &=& \frac{(1-\kappa)^2 (M^{s_1} - N^{s_1})}{(M^{s_1} + \lambda) (N^{s_1} + \lambda)} \notag \\
          &<& \frac{ (M^{s_1} - N^{s_1})}{(M^{s_1} + \lambda) (N^{s_1} + \lambda)} \notag
          \end{eqnarray} 
            where, $M = \Big(\frac{(\lambda+2){\psi}_{1a} + \{(\lambda+2)^2 - (1-\kappa)^2\}}{{\psi}_{1a} + (\lambda+2)} \Big)$ and $N=\Big(\frac{(\lambda+2){\psi}_{1b} + \{(\lambda+2)^2 - (1-\kappa)^2\}}{{\psi}_{1b} + (\lambda+2)} \Big) $. \\Here, we have looked at $4$ cases as given below (for $\kappa$ tends to $0^+$): \\ $(i)$ $s_1 \rightarrow 0$, $B_1$ $\rightarrow 0$. \\ $(ii)$ $s_1 \rightarrow \infty$, $B_1$ $\rightarrow 0$. \\ $(iii)$  $s_1 \rightarrow$ absolute value less than $1$, $B_1$ $\rightarrow 0$. \\ $(iv)$ $s_1 = 1$, \begin{eqnarray}
            B_1 &<& \frac{{\psi}_{1a} -{\psi}_{1b} }{({\psi}_{1a} + (\lambda+2))({\psi}_{1b} + (\lambda+2))} \notag \\
            &<& ({\psi}_{1a} -{\psi}_{1b}). \notag
            \end{eqnarray} 
                  For $B_2$, the argument is similar if we replace ${\psi}_{1a}, {\psi}_{1b}$ with  ${\psi}_{2a}$ and  ${\psi}_{2b}$. Therefore, any image of ${\Psi}$ under $\{({h^*}_{(3,2)_{\kappa}})^{s_2} ({h^{**}}_{(4,1)_{\kappa}})^{s_1} \}$ is less than ${\Psi}$, for $s_1 > 0$. Hence, we have shown the first step of the induction concerning $t$. Now, our objective is to establish the next step of the induction. For this purpose, we are aiming to show that for any combination of $s_1$ and $s_2$, we get $\lvert (\beta_2)'(x) \rvert < 1$ for any $x \in {\Psi}$. So, for assuming 
                  \begin{eqnarray}
                  \lvert \{({h^*}_{(3,2)_{\kappa}})^{s_{2n+2}} ({h^{**}}_{(4,1)_{\kappa}})^{s_{2n+1}} ({h^*}_{(3,2)_{\kappa}})^{s_{2n}} ({h^{**}}_{(4,1)_{\kappa}})^{s_{2n-1}} ... ({h^*}_{(3,2)_{\kappa}})^{s_2} ({h^{**}}_{(4,1)_{\kappa}})^{s_1}\} \rvert < 1, \forall \text{ x } \in \Psi \notag,
                  \end{eqnarray} 
                  we have to derive that it is true for $t=(n+1)$, i.e., the image of ${\Psi}$ under the transformation $\beta_{2(n+1)}$ is not greater than the image of ${\Psi}$ under $\beta_{2n}$. We have shown this by proving the two subcases $(i)$ $s_{2n+1} < 0$ and $(ii)$ $s_{2n+1} >0$ separately. \\ $(i)$ Subcase I, for $s_{2n+1} < 0$: Recall that, the image of the length of ${\Psi}$ under $\beta_{2n}$ is less than the length of ${\Psi}$. Since $s_{2n+1} < 0$, we get that the image of the length of ${\Psi}$ under $\{({h^{**}}_{(4,1)_{\kappa}})^{s_{2n+1}} \beta_{2n}\}$ is again less than the length of ${\Psi}$. Also since the image of $\Psi$ under the transformation $\beta_{2n}$ can be inside either the semi-circle $SC_{2,\kappa}$ or $SC_{3, \kappa}$, we assert that the image under the transformation $\{({h^{**}}_{(4,1)_{\kappa}})^{s_{2n+1}} \beta_{2n}\}$ is inside the semi-circle $SC_{4, \kappa}$, i.e., outside the semi-circles $SC_{2,\kappa}$ and $SC_{3,\kappa}$. Again, since the transformation $({h^*}_{(3,2)_{\kappa}})$ is contracting outside the semi-circle $SC_{2,\kappa}$, $({h^*}_{(3,2)_{\kappa}})^{-1}$ is contracting outside of the region $SC_{3,\kappa}$. So, the image under the transformation $\{({h^*}_{(3,2)_{\kappa}})^{s_{2n+2}}({h^{**}}_{(4,1)_{\kappa}})^{s_{2n+1}} \beta_{2n}\}$ is less than the image under the transformation $\{({h^{**}}_{(4,1)_{\kappa}})^{s_{2n+1}} \beta_{2n}\}$. This proves the subcase when $s_{2n+1} < 0$.  
          \\$(ii)$ Subcase II, for $s_{2n+1} >0$: To deduce this, we have studied two further subcases, one is when ${e} \le \lvert \beta_{2n} (x)\rvert < (\lambda + 1)$ and the other for $(\lambda -1) \le \lvert \beta_{2n}(x)\rvert < {e}$, where ${e} = \sqrt{\lambda^2- (1- {\kappa})^2}$ is the attractive fixed point of the transformation $({h^*}_{(3,2)_{\kappa}})$. 
         \\Subcase II(a): For the first subcase when ${e} \le \lvert \beta_{2n} (x)\rvert < (\lambda + 1)$, one can express the transformation $\beta'_{2t+2}(x)$ successively.
         \begin{eqnarray}
         \lvert \beta'_{2t+2}(x) \rvert &=& \Big \lvert \frac{d}{dx} \{ \beta_{2t+2}(x)\} \Big \rvert \notag \\
         &=& \Big \lvert \frac{d\{({h^*}_{(3,2)_{\kappa}})^{s_{2t+2}} ({h^{**}}_{(4,1)_{\kappa}})^{s_{2t+1}} \beta_{2t}(x)\}}{d\{({h^{**}}_{(4,1)_{\kappa}})^{s_{2t+1}} \beta_{2t}(x)\}} \Big \rvert \times \Big \lvert \frac{d\{({h^{**}}_{(4,1)_{\kappa}})^{s_{2t+1}} \beta_{2t}(x)\}}{d\{\beta_{2t}(x)\}} \Big \rvert \times \Big\lvert \frac{d\{\beta_{2t} (x)\}}{dx}  \Big\rvert \notag \\
         &=& \Big \lvert \frac{d\{({h^*}_{(3,2)_{\kappa}})^{s_{2t+2}} ({h^{**}}_{(4,1)_{\kappa}})^{s_{2t+1}} \beta_{2t}(x)\}}{d\{({h^{**}}_{(4,1)_{\kappa}})^{s_{2t+1}} \beta_{2t}(x)\}} \Big \rvert \times \Big \lvert \frac{d\{({h^{**}}_{(4,1)_{\kappa}})^{s_{2t+1}} \beta_{2t}(x)\}}{d\{\beta_{2t}(x)\}} \Big \rvert \times \lvert \beta'_{2t} (x)  \rvert \notag\\
         &<& \Big \lvert \frac{d\{({h^*}_{(3,2)_{\kappa}})^{s_{2t+2}} ({h^{**}}_{(4,1)_{\kappa}})^{s_{2t+1}} \beta_{2t}(x)\}}{d\{({h^{**}}_{(4,1)_{\kappa}})^{s_{2t+1}} \beta_{2t}(x)\}} \Big \rvert \times \Big \lvert \frac{d\{({h^{**}}_{(4,1)_{\kappa}})^{s_{2t+1}} \beta_{2t}(x)\}}{d\{\beta_{2t}(x)\}} \Big \rvert \notag \\
         &=& \Big \lvert \frac{d\{({h^*}_{(3,2)_{\kappa}})^{s_{2t+2}} ({h^{**}}_{(4,1)_{\kappa}})^{s_{2t+1}} \beta_{2t}(x)\}}{d\{({h^{**}}_{(4,1)_{\kappa}})^{s_{2t+1}} \beta_{2t}(x)\}} \Big \rvert \times \Big \lvert \frac{d\{({h^{**}}_{(4,1)_{\kappa}})^{s_{2t+1}} (x)\}}{dx} \Big \rvert, \text{ replacing } x \text{ by } \beta_{2t}(x) \notag\\
                  &=& \Big \lvert \frac{d\{({h^*}_{(3,2)_{\kappa}})^{s_{2t+2}} ({h^{**}}_{(4,1)_{\kappa}})^{s_{2t+1}} \beta_{2t}(x)\}}{d\{({h^{**}}_{(4,1)_{\kappa}})^{s_{2t+1}} \beta_{2t}(x)\}} \Big \rvert \times \Big \lvert \Big( \frac{(1-\kappa)^2}{\{x + (\lambda+2)\}} \Big)^{s_{2t+1}}\Big \rvert  \notag \\   
                 &<&  \frac{1}{\lvert x + (\lambda+2)\rvert^{s_{2t+1}}} \times\Big \lvert \frac{d\{({h^*}_{(3,2)_{\kappa}})^{s_{2t+2}} (y)\}}{dy} \Big \rvert, \text{ replacing } y \text{ by } \{({h^{**}}_{(4,1)_{\kappa}})^{s_{2t+1}} \beta_{2t}(x)\}  \notag\\ 
                 &<& \frac{1}{\lvert x + (\lambda+2)\rvert^{s_{2t+1}}} \times \Big \lvert \frac{(1-\kappa)^2  }{\Big\{\Big(\frac{(1-\kappa)^2}{\{x + (\lambda+2)\}}\Big)^{s_{2t+1}} + \lambda \Big \}^2} \Big \rvert \notag\\
                 &<& \frac{1}{\lvert x + (\lambda+2)\rvert^{s_{2t+1}}} \times  \frac{1}{\Big \lvert \Big(\frac{(1-\kappa)^2}{\{x + (\lambda+2)\}}\Big)^{s_{2t+1}} + \lambda \Big \rvert^2}  \notag\\
                 &<& 1, \text{ since outside the semi-circle } SC_{4,\kappa}, \text{ we have } \lvert \text{x} + (\lambda+2) \rvert > 1 - \kappa. \notag
         \end{eqnarray}
         
         So, we have reached the proof of the subcase II(a).
         \\Subcase  II(b): In this case, the given condition is $(\lambda -1) \le \lvert \beta_{2n}(x)\rvert < {e}$. So, this condition yields that $\lvert ({h^{**}}_{(4,1)_{\kappa}})^{s_{2t-1}} \beta_{2t-2} (x) \rvert$  $< (\lambda - 1)$. Therefore, $s_{2t-1} < 0$. Now, one can easily compute the value of  $\lvert \beta'_{2t-2}(x) \rvert$ by using the technique operated to evaluate the value of $\lvert \beta'_{2t+2}(x) \rvert$ in the subcase II(a). Then, the proof of the subcase II(b) follows. So, we have established that  $\lvert \beta'_{2t+2}(x) \rvert <1$. Consequently, we have finished the induction. 
          
            Hence, the proof of this lemma is done.
      \end{proof} 
      After establishing the preceding lemma, we are now prepared to derive Lemma \ref{l 2.3}. The subsequent lemma is also related to the lengths of components of the domain of discontinuity for the group $\Gamma^{N}_{S_{\kappa}}$ that intersects the upper vertical axis within the hyperbolic plane.
      
      \begin{lem} \label{l 2.3}
            The length of each component of $\{\Omega\textbf{(}{h^{**}}_{(4,1)_{\kappa}} \Gamma_{S_{\kappa}}^{N} ({h^{**}}_{(4,1)_{\kappa}})^{-1}\textbf{)} \cap i\mathbb{R}^+\}$ which meets the vertical fragment $i[0,$ $ (\lambda +2)(\lambda + 3)]$ is less than the following quantity  \begin{eqnarray}
             \Big\{\frac{2}{(4 \lambda^2 + 9\lambda + 3)} \times (\lambda +2) \times \sqrt{2.01} \times \sqrt{\kappa} \times \sqrt{6 \lambda^4 + 16 \lambda^3 + 3 \lambda^2 - 16 \lambda -8}\Big\}, \notag
            \end{eqnarray} where $\lambda = \lvert x \rvert > 1$ and $\lvert x \rvert$ denotes the modulus value of the number $x$.
            \end{lem}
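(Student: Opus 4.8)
The plan is to re-run the argument of Lemma~\ref{l 2.2} along the positive imaginary axis in place of the real line, the conjugation by $h^{**}_{(4,1)_{\kappa}}$ serving as the bookkeeping device that moves the relevant vertical segment $i[0,(\lambda+2)(\lambda+3)]$ into the position occupied by the real intervals $[-(\lambda+3),-(\lambda+1)]\cup[(\lambda+1),(\lambda+3)]$ in Lemma~\ref{l 2.2}. First I would record that $h^{**}_{(4,1)_{\kappa}}\Gamma^{N}_{S_{\kappa}}(h^{**}_{(4,1)_{\kappa}})^{-1}$ is again a rank~$2$ Fuchsian Schottky group whose defining curves are the semi-circles $h^{**}_{(4,1)_{\kappa}}(SC_{j,\kappa})$ (still Euclidean semi-circles centred on $\mathbb{R}\cup\{\infty\}$, since $h^{**}_{(4,1)_{\kappa}}\in PSL(2,\mathbb{R})$ preserves $\mathbb{R}$, $\mathbb{H}^{2}$ and the family of generalized circles), and that $\Omega\bigl(h^{**}_{(4,1)_{\kappa}}\Gamma^{N}_{S_{\kappa}}(h^{**}_{(4,1)_{\kappa}})^{-1}\bigr)=h^{**}_{(4,1)_{\kappa}}\bigl(\Omega(\Gamma^{N}_{S_{\kappa}})\bigr)$. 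Hence a component $J$ of $\{\Omega(h^{**}_{(4,1)_{\kappa}}\Gamma^{N}_{S_{\kappa}}(h^{**}_{(4,1)_{\kappa}})^{-1})\cap i\mathbb{R}^{+}\}$ meeting $i[0,(\lambda+2)(\lambda+3)]$ is the $h^{**}_{(4,1)_{\kappa}}$-image of a component $\widetilde J$ of $\{\Omega(\Gamma^{N}_{S_{\kappa}})\cap\gamma_{\kappa}\}$, where $\gamma_{\kappa}:=(h^{**}_{(4,1)_{\kappa}})^{-1}(i\mathbb{R}^{+})$ is a geodesic of $\mathbb{H}^{2}$ whose endpoints on $\mathbb{R}\cup\{\infty\}$, by the explicit form of $h^{**}_{(4,1)_{\kappa}}$ together with the location of its fixed points $\tau,\{-\tau\}$, fall inside the intervals already controlled in Lemma~\ref{l 2.2}.

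Next I would reproduce the length estimate of Lemma~\ref{l 2.2} for this configuration: take the component $\psi_{1}'$ cut off on $\gamma_{\kappa}$ by the fixed points of the word $h^{*}_{(3,2)_{\kappa}}h^{**}_{(4,1)_{\kappa}}h^{*}_{(3,2)_{\kappa}}(h^{**}_{(4,1)_{\kappa}})^{-1}$, set $\psi_{2}',\psi_{3}',\psi_{4}'$ for its images under $(h^{*}_{(3,2)_{\kappa}})^{-1}$ and $(h^{**}_{(4,1)_{\kappa}})^{-1}$, and show by induction on the number $t$ of alternating syllables that the associated product map $\beta_{2t}$ satisfies $\lvert\beta'_{2t}(z)\rvert<1$ on the arc. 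The derivative identities $\lvert(h^{*}_{(3,2)_{\kappa}})'(z)\rvert=\bigl\lvert(1-\kappa)^{2}/(z+\lambda)^{2}\bigr\rvert$ and $\lvert(h^{**}_{(4,1)_{\kappa}})'(z)\rvert=\bigl\lvert(1-\kappa)^{2}/\{z+(\lambda+2)\}^{2}\bigr\rvert$, and the contraction dichotomy according to whether a point lies inside or outside the relevant semi-circle, all hold throughout $\mathbb{H}^{2}$, so the three-case / two-subcase structure (on the signs of $s_{1}$ and $s_{2n+1}$, and on whether $\lvert\beta_{2n}\rvert$ lies above or below the attracting fixed point of $h^{*}_{(3,2)_{\kappa}}$) carries over essentially verbatim, giving $l(\widetilde J)$ bounded by the quantity of Lemma~\ref{l 2.2}. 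The one genuinely new ingredient is converting this horizontal bound into a vertical one: $l(J)=\int_{\widetilde J}\lvert(h^{**}_{(4,1)_{\kappa}})'\rvert$, and since $\widetilde J$ lies outside $SC_{4,\kappa}$ one has $\lvert z+(\lambda+2)\rvert>1-\kappa$ there, while on the part of $i\mathbb{R}^{+}$ reaching up to height $(\lambda+2)(\lambda+3)$ one checks $\lvert(h^{**}_{(4,1)_{\kappa}})'\rvert\le\lambda+2$; this single power of $\lambda+2$ is exactly the constant distinguishing the present bound from that of Lemma~\ref{l 2.2}, and combining the two steps yields the stated inequality.

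I expect the main obstacle to be precisely this scale-conversion step: one must verify both that $\widetilde J$ really does meet the pull-back of $[-(\lambda+3),-(\lambda+1)]\cup[(\lambda+1),(\lambda+3)]$, so that Lemma~\ref{l 2.2}'s estimate is genuinely available for $l(\widetilde J)$, and that $\sup\lvert(h^{**}_{(4,1)_{\kappa}})'\rvert$ over $\widetilde J$ is no larger than $\lambda+2$, uniformly in the length of the defining word; the latter is the analogue of the $\beta_{2t}$ contraction bookkeeping of Lemma~\ref{l 2.2} and should again reduce to induction on the number of alternating factors, using that $h^{*}_{(3,2)_{\kappa}}$ and $(h^{**}_{(4,1)_{\kappa}})^{\pm1}$ are contracting outside the appropriate semi-circles. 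If the conjugation bookkeeping turns out to be unwieldy, a safe fallback is to dispense with the conjugation altogether and re-run the induction of Lemma~\ref{l 2.2} directly along $i\mathbb{R}^{+}$, reading off the extra factor $\lambda+2$ from the length of the initializing segment $i[0,(\lambda+2)(\lambda+3)]$.
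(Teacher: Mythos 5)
Your reduction via the conjugation identity $\Omega\big({h^{**}}_{(4,1)_{\kappa}}\Gamma^{N}_{S_{\kappa}}({h^{**}}_{(4,1)_{\kappa}})^{-1}\big)={h^{**}}_{(4,1)_{\kappa}}\big(\Omega(\Gamma^{N}_{S_{\kappa}})\big)$ is fine, but the scale-conversion step that is supposed to produce the single factor $(\lambda+2)$ fails, and with it the proposal. Write ${h^{**}}_{(4,1)_{\kappa}}(z)=\frac{(\lambda+2)z+\{(\lambda+2)^2-(1-\kappa)^2\}}{z+(\lambda+2)}$. The pulled-back curve $\gamma_{\kappa}=({h^{**}}_{(4,1)_{\kappa}})^{-1}(i\mathbb{R}^{+})$ is the geodesic with endpoints $({h^{**}}_{(4,1)_{\kappa}})^{-1}(\infty)=-(\lambda+2)$ and $({h^{**}}_{(4,1)_{\kappa}})^{-1}(0)=-(\lambda+2)+\frac{(1-\kappa)^2}{\lambda+2}$; every point of this small semicircle satisfies $\lvert z+(\lambda+2)\rvert\le\frac{(1-\kappa)^2}{\lambda+2}<1-\kappa$, so $\gamma_{\kappa}$, and hence your $\widetilde J$, lies entirely \emph{inside} $SC_{4,\kappa}$, not outside it as you assert. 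Consequently your own derivative formula gives $\lvert({h^{**}}_{(4,1)_{\kappa}})'(z)\rvert=\frac{(1-\kappa)^2}{\lvert z+(\lambda+2)\rvert^{2}}\ge\frac{(\lambda+2)^2}{(1-\kappa)^2}$ on $\widetilde J$, with blow-up near the endpoint $-(\lambda+2)$ (which maps to $\infty$); the claimed bound $\sup_{\widetilde J}\lvert({h^{**}}_{(4,1)_{\kappa}})'\rvert\le\lambda+2$ is false, and integrating the derivative over $\widetilde J$ yields an expansion factor of at least $(\lambda+2)^2$, not $(\lambda+2)$. (Equivalently, on $i[0,(\lambda+2)(\lambda+3)]$ one has $\lvert(({h^{**}}_{(4,1)_{\kappa}})^{-1})'(iy)\rvert=\frac{(1-\kappa)^2}{(\lambda+2)^2+y^2}\le\frac{(1-\kappa)^2}{(\lambda+2)^2}$, which says the same thing.) The other half of the plan is also not available as stated: the components $\widetilde J$ are arcs of a tiny semicircle inside $SC_{4,\kappa}$, a region where ${h^{**}}_{(4,1)_{\kappa}}$ is expanding, so the inside/outside contraction dichotomies that drive the $\beta_{2t}$ induction of Lemma \ref{l 2.2} do not transfer "essentially verbatim", and Lemma \ref{l 2.2} itself (which concerns components of $\Omega(\Gamma^{N}_{S_{\kappa}})\cap\mathbb{R}$) gives no bound for arcs of $\gamma_{\kappa}$.

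The paper obtains the factor $(\lambda+2)$ by a different mechanism, which your route cannot reproduce. It first checks that $({h^{**}}_{(4,1)_{\kappa}})^{-1}$ is contracting on the relevant range, then computes $({h^{**}}_{(4,1)_{\kappa}})^{2}$ as $\kappa\rightarrow 0^{+}$ and replaces it by the dilation $h:z\mapsto(\lambda+2)z$ (with which it agrees at specified points); applying the generator twice to the set $[-(\lambda+3),-(\lambda+1)]\cup[(\lambda+1),(\lambda+3)]$ controlled by Lemma \ref{l 2.2} is then treated as carrying those components to components meeting $i[1,(\lambda+2)(\lambda+3)]$ with lengths multiplied by at most $(\lambda+2)$, and repeated application of the contracting map $({h^{**}}_{(4,1)_{\kappa}})^{-1}$ extends the range down to $i[0,(\lambda+2)(\lambda+3)]$ without increasing lengths. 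That dilation approximation, not a pointwise derivative bound along the conjugated geodesic, is where the single power of $(\lambda+2)$ comes from; your fallback suggestion of re-running the Lemma \ref{l 2.2} induction directly along $i\mathbb{R}^{+}$ is closer in spirit to this, but as sketched it contains no argument for why the initializing segment contributes exactly one factor of $(\lambda+2)$, so the stated inequality is not established by the proposal.
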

            
        \begin{proof}
        It is obvious that, $\Big({h^{**}}_{(4,1)_{\kappa}}\Big)^{-1}(z) = \frac{( \lambda + 2)  z - \{( \lambda + 2)^2 - (1-\kappa)^{2}\}}{-z + (\lambda + 2)}$.\\ So, when $z$ $<$ $\frac{\lambda^2 + 5 \lambda + 5}{\lambda +3}$ (for $\kappa \rightarrow 0^+$), the value of the transformation $\Big({h^{**}}_{(4,1)_{\kappa}}\Big)^{-1}$ is less than $1$.\\
        Hence, the function $\Big({h^{**}}_{(4,1)_{\kappa}}\Big)^{-1}$ is contracting.\\ Now, for $\kappa \rightarrow 0^+$, we have the following. \begin{eqnarray}
        \Big({h^{**}}_{(4,1)_{\kappa}}\Big)^2 : z & \longrightarrow & \frac{\Big[(\lambda +2) - \frac{1}{2(\lambda+2)} \Big]z + (\lambda + 2)^2 -1 }{z + \Big[(\lambda +2) - \frac{1}{2(\lambda+2)} \Big]}.\notag
        \end{eqnarray}
        Note that, $\Big({h^{**}}_{(4,1)_{\kappa}}\Big
        )^2(z)$ $=$ $h(z)$, \begin{eqnarray} \text{ for }
        z= \frac{-(2\lambda^3 + 10 \lambda^2 -15\lambda +7) \pm \sqrt{4\lambda^6 + 56 \lambda^5 + 168 \lambda^4 +104 \lambda^3 + 885\lambda^2 + 1042\lambda + 433}}{4(\lambda +2)^2}, \notag
        \end{eqnarray} where, $h: z \rightarrow (\lambda +2)z$, where $z$ $\in$ $\mathbb{H}^2$. \\In fact, for simplicity, we want to go forward to the function $h$ instead of the transformation  $\Big({h^{**}}_{(4,1)_{\kappa}}\Big
                )^2$, without loss of generality.  Now, if we utilize the transformation ${h^{**}}_{(4,1)_{\kappa}}$ twice to the domain mentioned in Lemma \ref{l 2.2}, i.e., the fragment  $[-(\lambda + 3), -(\lambda + 1)] \cup [(\lambda + 1), (\lambda + 3)]$, the component $\{\Omega\textbf{(}{h^{**}}_{(4,1)_{\kappa}} \Gamma_{S_{\kappa}}^{N} ({h^{**}}_{(4,1)_{\kappa}})^{-1}\textbf{)} \cap i\mathbb{R}^+\}$ will touch the vertical segment  $i[1,$ $ (\lambda +2)(\lambda + 3)]$. Since the transformation $\Big({h^{**}}_{(4,1)_{\kappa}}\Big)^{-1}$ is contracting, we can use the function $\Big({h^{**}}_{(4,1)_{\kappa}}\Big)^{-1}$ several times to extend the standing piece $i[1,$ $(\lambda +2)(\lambda + 3)]$ to the whole upright portion, i.e., $i[0,$ $(\lambda +2)(\lambda + 3)]$. Observe that, this act doesn't affect the size of the interval. So the lemma follows.
        \end{proof}

    Next, we performed the consecutive lemma related to the distance  between the semi-circles in the set ${SC}^*_\kappa$ which we have acquired from the images of the semi-circles $SC_{1,\kappa}, SC_{2,\kappa}, SC_{3,\kappa}$, and $SC_{4,\kappa}$ applying by the group  $\Gamma^{N}_{S_{\kappa}}$.
   
   \begin{lem} \label{l 2.4}
      Let, $\epsilon > 0$. If $\lvert Y_{jk\kappa} - Y_{(j+1)k\kappa} \rvert < \epsilon$ for at least two values of $k \in \{1, 2,3\}$ then $Z_{j\kappa} < \Big [\Big( \frac{2\lambda + 5}{2\sqrt{2} \sqrt{\lambda +2}} + 1 \Big) \Big(16[(2\lambda +5) \{1 + \textbf{(} \lambda + 2 + \tau \textbf{)} +  \textbf{(} \lambda + 2 + \tau \textbf{)}^2\}] + 1\Big) +1 \Big]\epsilon$.
      \end{lem}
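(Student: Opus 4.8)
The plan is to exploit that each $SC^j_{\kappa}$ is a hyperbolic geodesic of $\mathbb{H}^2$, i.e.\ a Euclidean semicircle centred on $\mathbb{R}\cup\{\infty\}$, and so carries only two real parameters --- its pair of ideal endpoints. Consequently its three contact data, $Y_{j1\kappa}$ on $\mathbb{R}^+$, $Y_{j2\kappa}$ on $i\mathbb{R}^+$ and $Y_{j3\kappa}$ on $-\mathbb{R}^+$, cannot be independent: whenever $SC^j_{\kappa}$ straddles the imaginary axis, with ideal endpoints $a_{j\kappa}<0<b_{j\kappa}$, one has $Y_{j1\kappa}=b_{j\kappa}$, $Y_{j3\kappa}=-a_{j\kappa}$ and $Y_{j2\kappa}=\sqrt{-a_{j\kappa}b_{j\kappa}}$, whence the geometric-mean identity $Y_{j2\kappa}^{2}=Y_{j1\kappa}Y_{j3\kappa}$, and likewise for $SC^{j+1}_{\kappa}$. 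First I would record this identity and observe that under the hypothesis --- that both $SC^j_{\kappa}$ and $SC^{j+1}_{\kappa}$ meet, and are $\epsilon$-close along, at least two common coordinate rays --- both semicircles are in this straddling position, so all three of $Y_{j1\kappa},Y_{j2\kappa},Y_{j3\kappa}$ (and their analogues for $j+1$) are defined and linked by the identity. Since $Z_{j\kappa}=\max_{k}\lvert Y_{jk\kappa}-Y_{(j+1)k\kappa}\rvert$ and two of these three differences are already $<\epsilon$, it remains only to bound the third.

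Next I would assemble the a priori localisation of the semicircles. By the standing hypotheses on $SC^*_{\kappa}$ (nested, each separating $\tau$ from $-\tau$, meeting $(-\tau,\lambda+3)$ once) together with the length estimates obtained in Lemmas \ref{l 2.1}, \ref{l 2.2} and \ref{l 2.3}, every one of the numbers $Y_{jk\kappa},Y_{(j+1)k\kappa}$ lies in the bounded interval $[0,\lambda+2+\tau]$; this is what produces the polynomial $1+(\lambda+2+\tau)+(\lambda+2+\tau)^{2}$ in the statement, which arises from iterating $\lvert u-v\rvert$-type estimates on products and quotients of quantities in $[0,\lambda+2+\tau]$. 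Moreover, because each semicircle genuinely separates $\tau$ from $-\tau$ one gets $\lvert a_{j\kappa}\rvert>\tau$, hence a uniform positive lower bound on the contact heights that appear as denominators below; after the rescaling by the map $h:z\rightarrow(\lambda+2)z$ used in the proof of Lemma \ref{l 2.3} this bound has the size $(2\sqrt2\sqrt{\lambda+2})^{-1}$, while the corresponding upper bound $2\lambda+5=2(\lambda+2)+1$ controls the numerators.

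With these in hand the argument splits into the three cases $\{k_{1},k_{2}\}=\{1,3\},\{1,2\},\{2,3\}$ according to which contact datum is the missing one. In the case $\{1,3\}$ I would recover $Y_{j2\kappa}=\sqrt{Y_{j1\kappa}Y_{j3\kappa}}$ (and the same for $j+1$) and estimate the resulting difference by
\begin{eqnarray}
\bigl\lvert \sqrt{u}-\sqrt{v}\,\bigr\rvert &=& \frac{\lvert u-v\rvert}{\sqrt{u}+\sqrt{v}},\notag\\
\lvert uv-u_{1}v_{1}\rvert &\le& \lvert u\rvert\,\lvert v-v_{1}\rvert+\lvert v_{1}\rvert\,\lvert u-u_{1}\rvert,\notag
\end{eqnarray}
substituting $\lambda+2+\tau$ for the numerators and the lower bound above for $\sqrt{u}+\sqrt{v}$; in the cases $\{1,2\}$ and $\{2,3\}$ I would instead recover the missing datum from the identity in the form $Y_{j3\kappa}=Y_{j2\kappa}^{2}/Y_{j1\kappa}$ (resp.\ $Y_{j1\kappa}=Y_{j2\kappa}^{2}/Y_{j3\kappa}$) and control its variation by
\begin{eqnarray}
\Bigl\lvert \tfrac{u^{2}}{v}-\tfrac{u_{1}^{2}}{v_{1}}\Bigr\rvert &\le& \frac{\lvert u+u_{1}\rvert}{\lvert v\rvert}\,\lvert u-u_{1}\rvert+\frac{u_{1}^{2}}{\lvert v\rvert\,\lvert v_{1}\rvert}\,\lvert v-v_{1}\rvert.\notag
\end{eqnarray}
In each case the difference of the missing datum comes out bounded by the constant displayed in the statement of the lemma times $\epsilon$, the factor $16$ and the additive $+1$'s absorbing the finitely many constants produced by transporting the identities between $SC^j_{\kappa}$, $SC^{j+1}_{\kappa}$ and their ${h^{**}}_{(4,1)_{\kappa}}$-images. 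Since $Z_{j\kappa}$ is the maximum of the three differences and two of them are $<\epsilon$, the claimed bound on $Z_{j\kappa}$ follows.

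I expect the main obstacle to be the control of the near-degenerate configurations: a straddling semicircle whose positive ideal endpoint $b_{j\kappa}=Y_{j1\kappa}$ is very small (so that it is about to leave the right half-plane) makes the denominator $Y_{j1\kappa}$ in the case $\{1,2\}$ dangerously small, and one must show --- using the nesting of the chain and the fixed arithmetic of $\lambda,\lambda+2,\tau$ --- either that such a semicircle cannot share two $\epsilon$-close coordinate rays with its neighbour, or that the two remaining contact data $Y_{j2\kappa},Y_{j3\kappa}$ degenerate in step with $Y_{j1\kappa}$ so that the quotient estimate still closes. Keeping every step numerically tight enough to land on the stated constant, rather than on a larger multiple of $\epsilon$, is the delicate part of the bookkeeping.
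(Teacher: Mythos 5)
Your reduction is not the paper's argument, and as written it does not close. You reduce the lemma to the geometric-mean identity $Y_{j2\kappa}^{2}=Y_{j1\kappa}Y_{j3\kappa}$ for a semicircle centred on $\mathbb{R}$ and then try to bound the one missing contact difference by product/quotient estimates; but every one of your three cases needs a uniform positive lower bound on the denominators ($\sqrt{Y_{j1\kappa}Y_{j3\kappa}}+\sqrt{Y_{(j+1)1\kappa}Y_{(j+1)3\kappa}}$ in the case $\{1,3\}$, and $Y_{j1\kappa}$ or $Y_{j3\kappa}$ in the other two), and no such bound follows from the separation of $\pm\tau$ alone. Separation only forces the negative endpoint to satisfy $\lvert a_{j\kappa}\rvert>\tau$, i.e.\ it bounds $Y_{j3\kappa}$ from below; the positive endpoint $b_{j\kappa}=Y_{j1\kappa}$, and with it the crossing height $Y_{j2\kappa}=\sqrt{-a_{j\kappa}b_{j\kappa}}$, can a priori be arbitrarily small, in which case $\lvert\sqrt{u}-\sqrt{v}\rvert=\lvert u-v\rvert/(\sqrt{u}+\sqrt{v})$ only yields an $O(\sqrt{\epsilon})$ bound, not the stated $O(\epsilon)$ with the displayed constant. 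This is exactly the ``near-degenerate configuration'' you flag as the main obstacle and then leave unresolved; resolving it is the actual content of the paper's proof, and it is done with the group action, not with interval arithmetic. Concretely, the paper (a) introduces the semicircle concentric with $SC^{j+1}_{\kappa}$ and tangent to $SC^{j}_{\kappa}$ at a real point $P$, and uses that its image under ${h^{**}}_{(4,1)_{\kappa}}$ must lie outside $SC^{j}_{\kappa}$ to get the growth estimate $\lvert Y'_{k+1}\rvert<(\lambda+2+\tau)\lvert Y'_{k}\rvert$, whence $\sin\theta>\{1+(\lambda+2+\tau)+(\lambda+2+\tau)^{2}\}^{-1}$ --- this, not estimates on quantities in $[0,\lambda+2+\tau]$, is where the quadratic polynomial in the constant comes from; and (b) uses the fundamental-domain disjointness ${h^{**}}_{(4,1)_{\kappa}}(SC^{j}_{\kappa})\cap SC^{j+1}_{\kappa}=\emptyset$ to prove $\lvert x_{j}\rvert<\frac{2\lambda+3}{2\lambda+5}r_{j}-\tau$, hence $r_{j}^{2}-(x_{j}+\tau)^{2}>\frac{8(\lambda+2)}{(2\lambda+5)^{2}}r_{j}^{2}$, which is precisely the uniform transversality lower bound you are missing and is the true source of the factor $\frac{2\lambda+5}{2\sqrt{2}\sqrt{\lambda+2}}$ (your attribution of that factor to the rescaling $h:z\mapsto(\lambda+2)z$ of Lemma \ref{l 2.3} is not correct; that map plays no role in Lemma \ref{l 2.4}).

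With these two dynamical inputs the paper bounds the distance $d_{j}$ between the centres by the cosine rule in the triangle $Y'_{2}C_{j+1}C_{j}$, combining $(1+\cos\theta)d_{j}<r_{j}\epsilon'(r_{j}-d_{j})^{-1}$ with $(r_{j}-d_{j})^{-1}<(2\lambda+5)/r_{j}$ to get $d_{j}$ bounded by a fixed multiple of $\epsilon$ as in its displayed estimate, and only afterwards converts $d_{j}$, $r_{j}-r_{j+1}$ and $\lvert x_{j}-x_{j+1}\rvert$ into the bound on $Z_{j\kappa}$. Until you import the analogues of steps (a) and (b) --- or prove by some other means that a semicircle in ${SC}^{*}_{\kappa}$ cannot be nearly tangent to the coordinate rays on which it is $\epsilon$-close to its neighbour --- your quotient and square-root estimates have uncontrolled denominators, the constant in the statement cannot be reached, and the argument has a genuine gap rather than being a complete alternative proof.
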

      
      \begin{proof}
      Let, ${C}_j = {x}_j + i.0.{y}_j$ and ${r}_j$ be the center and radius of the semi-circle ${SC}_{\kappa}^j$ respectively. Assume that, ${d}_j$ is the distance between the centers of the semi-circles ${SC}_{\kappa}^j$ and ${SC}_{\kappa}^{j+1}$ where ${C}_{j+1} = {x}_{j+1} + i.0.{y}_{j+1}$ and ${r}_{j+1}$ are the center and radius of the semi-circle ${SC}_{\kappa}^{j+1}$ respectively. Now we define a semi-circle $SC_\kappa$ which is concentric to the semi-circle ${SC}_{\kappa}^{j+1}$ and tangent to ${SC}_{\kappa}^j$ at a point $P$. Clearly, $P$ lies on the real axis. So, ${SC}_\kappa$ can be expressed as $\sqrt{(x-{x}_{j+1})^2 + y^2} = ({r}_j - {d}_j)$. We set $Y'_k = {SC}_\kappa \cap i^{k-1} \mathbb{R^+}$. Note that, $Y'_3 P$ is the diameter of the semi-circle ${SC}_\kappa$ (see, Figure $4$). Let, $k_1$ and $k_2$ denote the two values of $k$ for which $\lvert Y_{jk_i \kappa} - Y_{(j+1)k_i \kappa} \rvert < \epsilon$. Without loss of generality, we suppose that \begin{eqnarray}
       \angle Y'_2 {C}_{j+1} {C}_j \le \angle Y'_2{C}_{j+1} Y'_1 & \text{or} & \angle {SC}_{j+1} Y'_2 \le \angle P{C}_{j+1}Y'_2. \notag
      \end{eqnarray} Let, $\theta$ denote $\angle S{C}_{j+1}Y'_2$. First, we consider the case when $\frac{\pi}{2} \le \theta \le \pi$, then we comment on that when $0\le \theta \le \frac{\pi}{2}$. Now, considering the triangle $\triangle {S}{P}Y'_2$ with $\angle Y'_2 {P}{S} = 90^\circ$, we get \begin{eqnarray}
      \sin \theta > \sin \angle Y'_2 {S}{P} & \text{and} & \sin \angle Y'_2 {S}{P} = \frac{\lvert Y'_2 - {P} \rvert}{2({r}_j - {d}_j)}. \notag
      \end{eqnarray} 
      So \begin{equation}
          \sin \theta > \frac{\lvert Y'_2 - P \rvert}{2(r_j - d_j)}.
          \end{equation}
          Again, $\lvert Y'_1\rvert + \lvert Y'_2 \rvert + \lvert Y'_3 \rvert \ge $ Diameter of $SC_\kappa$. \begin{equation}
          i.e., \frac{1}{2(r_j - d_j)} \ge \frac{1}{\sum_{k=1}^{3} \lvert Y'_k \rvert}.
          \end{equation}
         Also, by triangle inequality we obtain \begin{eqnarray}
          \lvert Y'_{k_i} - P \rvert + \lvert  Y'_{k_j} - P \rvert & \ge & \lvert Y'_{k_i} - Y'_{k_j} \rvert \notag \\
          &>& \lvert Y'_{k_i}\rvert,
          \end{eqnarray}
         for $i\ne j$, where $i,j=1,2$ or $3$.\\ Combining $(3)$, $(4)$, and $(5)$ we have \begin{equation}
          \sin \theta > \frac{\lvert Y'_3\rvert}{2(\sum_{k=1}^{3} \lvert Y'_k \rvert)}.
          \end{equation}
            \begin{center}
                   \hspace*{-0.5cm}
                   \includegraphics[width=17.6cm, height=8.5cm]{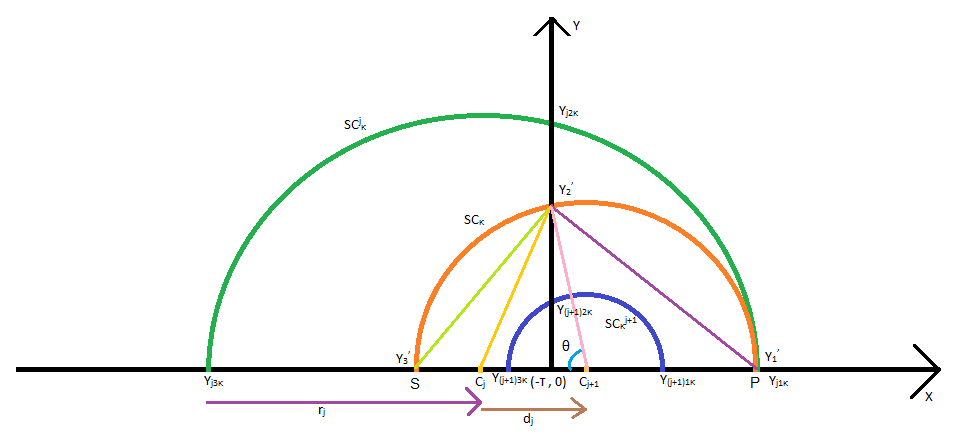} 
                   $Figure: 4$
                   \end{center}
            
            It is known that the fixed points of the transformation  ${h}_{(4,1)_{\kappa}}^{**}$ are $\tau$ and $- \tau$. Now, since the semi-circle ${SC}_\kappa$ separates the fixed points of the transformation ${h}_{(4,1)_{\kappa}}^{**}$, any image of the semi-circular curve ${SC}_\kappa$ under the map  ${h}_{(4,1)_{\kappa}}^{**}$ will also separate these fixed points. These fixed points are situated in either one of the curves in the set ${SC}^*_\kappa = \{{SC}^1_{\kappa}, {SC}^2_{\kappa}, {SC}^3_{\kappa}, ..., {SC}^{\textbf{I}^+}_{\kappa}\}$ or outside the semi-circle $SC_{1, \kappa}$. The image can not be inside ${SC}_{\kappa}^j$ (where $j = 1, 2,3, ..., \textbf{I}^+$). Also, the image can not be the semi-circle ${SC}_{\kappa}^j$ either, since ${SC}_{\kappa}^j$ is tangential to ${SC}_\kappa$ at the point $P$. Therefore it must be outside of the semi-circle ${SC}_{\kappa}^j$. So, the images of the $Y_k '$ for the map ${h}_{(4,1)_{\kappa}}^{**}$ must be outside the $Y_k '$ themselves. 
            
            Hence we get \begin{eqnarray}
            \lvert Y_{k+1}' \rvert &<& \Big(\frac{(\lambda + 2) z + \{(\lambda + 2)^2 - (1 - \kappa)^2\}}{z + (\lambda + 2)}\Big) \lvert Y_k ' \rvert \notag \\
            &<& \{ (\lambda + 2) - (-\tau )\} \lvert Y_k ' \rvert   \text{  (see, Section 2 for details)  } \notag \\
           \text{ i.e., } \lvert Y_{k+1}' \rvert &<& (\lambda + 2 + \tau) \lvert Y_k ' \rvert. \notag
            \end{eqnarray}  
            
            So \begin{eqnarray}
            \sum_{k=1}^{3} \lvert Y_k ' \rvert < \lvert Y_{k_1}' \rvert + \textbf{(} \lambda + 2 + \tau  \textbf{)}\lvert Y_{k_1}' \rvert + \textbf{(} \lambda + 2 + \tau  \textbf{)}^2  \lvert Y_{k_1}' \rvert .\notag
            \end{eqnarray}
            Now, equation $(6)$ yields
            \begin{eqnarray}
            \sin \theta > \frac{1}{\{1 + \textbf{(} \lambda + 2 + \tau \textbf{)} + \textbf{(} \lambda + 2 + \tau \textbf{)}^2 \}}. \notag
            \end{eqnarray}
             
             Therefore \begin{equation}
            (1+\cos \theta)^{-1} < 8 \{1 + \textbf{(} \lambda + 2 + \tau \textbf{)} + \textbf{(} \lambda + 2 + \tau \textbf{)}^2 \}^2.
            \end{equation} 
            
            Since the semi-circular curves ${SC}_{\kappa} ^j$ and ${SC}_{\kappa} ^{j+1}$, along with two other curves in the set $\{q(SC_{i,{\kappa}}): q\in \Gamma^{{N}}_{S_{\kappa}},$ $i=1,2,3, \text{ and }4\}$ bound a fundamental domain for $\Gamma_{S_{\kappa}}^{{N}}$, where $\Gamma_{S_{\kappa}}^{{N}}$ $=$  $< {h^*}_{(3,2)_{\kappa}} , {h^{**}}_{(4,1)_{\kappa}} >$, we assert that ${h^{**}}_{(4,1)_{\kappa}}(SC_{\kappa}^j) \cap SC_{\kappa}^{j+1} = \phi$. Let, $M$ and $N$ be two points lying on the semi-circle ${SC}_{\kappa} ^j$ (where $j = 1, 2,3, ..., \textbf{I}^+$). Note that the transformation ${h^{**}}_{(4,1)_{\kappa}}$  preserves any line through the points $\pm \tau$. Now, if we take a line ${H}_{(4,1)_{\kappa}}$ through the point $\{-\tau\}$ and the point $({x}_j, 0)$ we have the distance between the point $M$ and ${C}_j$ is $\{{r}_j - \textbf{(} \lvert {C}_j \rvert + \tau \textbf{)}\}$, which is away from the fixed point $\{-\tau \}$. Further, the distance between the point $N$ and ${C}_j$ is $\{{r}_j + \textbf{(} \lvert {C}_j \rvert + \tau \textbf{)}\}$. Also, the image of point $M$ under the transformation ${h^{**}}_{(4,1)_{\kappa}}$ situates on the line ${H}_{(4,1)_{\kappa}}$ and further from the point $\{-\tau \}$ than the point $N$. 
            
            This gives the following.
             \begin{eqnarray}
             \Big(\frac{( \lambda  + 2)z + \{ ( \lambda  + 2)^2 - (1 - \kappa)^2\}}{z + ( \lambda  + 2)} \Big) \textbf{(} {r}_j - (\lvert {C}_j \rvert + \tau ) \textbf{)} &>& \frac{1}{2} \textbf{(} {r}_j + ( \lvert {C}_j \rvert + \tau) \textbf{)}. \notag
             \end{eqnarray}
             Clearly for $\kappa \rightarrow 0^+,$ the subsequent result holds (see, section $1$). \begin{eqnarray}
             \Big(\frac{( \lambda  + 2)z + \{ ( \lambda  + 2)^2 - (1 - \kappa)^2\}}{z + ( \lambda  + 2)} \Big) &<& ( \lambda  + 2).\notag
             \end{eqnarray} So, by utilizing the above two consecutive results, we obtain \begin{equation}
            \lvert {C}_j \rvert = \lvert {x}_j \rvert < \Big\{ \frac{(2 \lambda  + 3)}{(2 \lambda + 5)} {r}_j - \tau \Big\}. 
            \end{equation} 
            Also \begin{eqnarray}
            {d}_j &\le& (\lvert {C}_j \rvert + \lvert {C}_{j+1} \rvert) \notag \\
           &\le& \{ \lvert {x}_j \rvert + \lvert {x}_{j+1} \rvert + 2 \tau \}.
            \end{eqnarray}
           Again, since the point $\{-\tau\}$ is located inside the semi-circle ${SC}_{\kappa} ^{j+1}$, it is also placed inside the semi-circle ${SC}'$. Hence we get \begin{eqnarray}
           \lvert {x}_{j+1} + \tau \rvert &\le& \{\lvert {x}_{j+1} \rvert + \lvert \tau \rvert \} \notag \\
           &<& \{ {r}_j - {d}_j\}.
           \end{eqnarray}
            Now, combining the equations $(8), (9),$ and $(10)$, we obtain the successive sequel. \begin{equation}
            {d}_j < 2 \Big(\frac{\lambda + 2}{2 \lambda + 5}\Big) {r}_j.
            \end{equation} 
            Let, $\epsilon' = \lvert Y_{jk_{i}{\kappa}} -Y'_{k_i} \rvert < \epsilon$, where $k_i = i$, for $i=1,2$, and $3$. Note that, for $k_i = 1, \epsilon' =0$ and $k_i = 2$ or $3$, $\epsilon' >0$. Clearly for $k_i =1, {r}_j = \lvert {C}_j - Y'_{k_i} \rvert + \epsilon'$ and when $k_i = 2$ or $3$, ${r}_j < \lvert {C}_j - Y'_{k_i} \rvert + \epsilon'$, where $\epsilon' >0$. Now, if we look at the circle at infinity, for $k_i =1$ and $k_i =3$, we get $({r}_j - \epsilon') = \lvert {C}_j - Y'_{k_1} \rvert$ and $({r}_j - \epsilon') < \lvert {C}_j - Y'_{k_3} \rvert$ respectively. Again, if we observe the triangle $\triangle Y'_{k_2} {C}_{j+1} {C}_j$, for $k_i = 2$, it gives $({r}_j - \epsilon') < \lvert {C}_j -Y'_{k_2} \rvert$. Now, applying the cosine formula on the triangle $\triangle Y'_{k_2} {C}_{j+1} {C}_j$, we have \begin{eqnarray}
            ({r}_j - \epsilon')^2 &<& \lvert {C}_j - Y'_{k_2} \rvert ^2 \notag \\
            &=& {d}_j ^2 + ({r}_j - {d}_j)^2 - 2{d}_j ({r}_j - {d}_j)\cos \theta. \notag 
            \end{eqnarray} So, for $k_i = i$, where $i=2$, we gain  
              \begin{eqnarray}
              (1 + \cos\theta)d_j &<& \frac{r_j\epsilon'}{r_j - d_j} \notag \\
              \text{ i.e., } {d}_j &<& {r}_j \epsilon ({r}_j -{d}_j)^{-1} (1+\cos\theta)^{-1}. 
              \end{eqnarray}
               Also, equation $(11)$ contributes \begin{eqnarray}
              ({r}_j - {d}_j)^{-1} < \frac{(2 \lambda  + 5)}{{r}_j}.\notag
              \end{eqnarray} 
               Putting the values of $(1+\cos \theta)^{-1}$ and $({r}_j - {d}_j)^{-1}$, equation $(12)$ reduces to the ensuing form. \begin{equation}
                 {d}_j < 8 (2 \lambda  + 5) \{1 + \textbf{(} \lambda  + 2 + \tau\textbf{)} + \textbf{(} \lambda + 2 + \tau\textbf{)}^2\} \epsilon.
                 \end{equation}
              Notice that, for the case $0\le \theta \le \frac{\pi}{2}$, the value of $(1+\cos \theta)^{-1}$ is trivially included in the equation $(7)$.
              \\ Now, in the following, we are aiming to evaluate the value of $Z_{j{\kappa}}$.
              \begin{eqnarray}
              Z_{j{\kappa}} &=& \max_k \{\lvert Y_{jk\kappa} - Y_{(j+1)k\kappa} \rvert\}\notag\\
              &<& \sum_{k=1}^{3} \lvert Y_{jk\kappa} - Y_{(j+1)k\kappa} \rvert\notag\\
              &=& 2 {r}_j - 2 {r}_{j+1} + \sqrt{{r}_j ^2 - \textbf{(}{x}_j + \tau \textbf{)}^2} - \sqrt{{r}_{j+1} ^2 - \textbf{(}{x}_{j+1} + \tau \textbf{)}^2}\notag\\
              &=& 2 {r}_j - 2 {r}_{j+1} + \frac{\{{r}_j^2 - ({x}_j + \tau )^2\} - \{{r}_{j+1}^2 - ({x}_{j+1} + \tau )^2\}}{\sqrt{{r}_j^2 - ({x}_j + \tau )^2} + \sqrt{{r}_{j+1}^2 - ({x}_{j+1} + \tau )^2}}.\notag
              \end{eqnarray}
           		From equation $(8)$ we earn
                  \begin{eqnarray}
                  \lvert {x}_j \rvert &<& \Big\{ \frac{(2 \lambda  + 3)}{(2 \lambda + 5)} {r}_j - \tau \Big\}. \notag
                  \end{eqnarray}
                     So 
                     \begin{eqnarray}
                     {r}_j^2 - ({x}_j + \tau)^2 &>& \frac{8( \lambda  + 2)}{(2  \lambda  + 5)^2} {r}_j^2. \notag
                     \end{eqnarray}
                     Hence 
                     \begin{eqnarray}
                     \frac{1}{\sqrt{{r}_j^2 - ({x}_j + \tau)^2 + {r}_j^2} - \sqrt{({x}_j + \tau)^2}} < \frac{(2 \lambda  + 5)}{2\sqrt{2}\sqrt{ \lambda  + 2} ({r}_j + {r}_{j+1})}. \notag
                     \end{eqnarray}
                      Therefore 
                  \begin{eqnarray}
                 Z_{j{\kappa}} &<& 2 {r}_j - 2 {r}_{j+1} + \frac{({r}_j^2 - {r}_{j+1}^2)+ \{\lvert {x}_j - {x}_{j+1} \rvert - 2 \tau\} ({r}_j + {r}_{j+1})}{\frac{2\sqrt{2} \sqrt{\lambda  + 2} ({r}_j + {r}_{j+1})}{(2 \lambda  + 5)}} \notag\\
                   &<& 2({r}_j - {r}_{j+1}) + \frac{(2 \lambda  + 5)}{2\sqrt{2}\sqrt{ \lambda  + 2}} \Big[ ({r}_j - {r}_{j+1}) + \lvert {x}_j - {x_{j+1}} \rvert - 2\tau \Big] \notag\\
                   &=& \Big[\frac{2  \lambda  + 5 + 4 \sqrt{2} \sqrt{ \lambda  + 2} }{2\sqrt{2}\sqrt{ \lambda  + 2}}  \Big] ({r}_j - {r}_{j+1}) + \frac{2 \lambda  + 5}{2\sqrt{2} \sqrt{ \lambda  + 2}} \Big [\lvert {x}_j - {x}_{j+1}\rvert - 2\tau \Big] \notag\\
                   &=& \frac{1}{2\sqrt{2}\sqrt{\lambda  + 2}}  \Big[({r}_j - {r}_{j+1}) \textbf{(} 4\sqrt{2}\sqrt{ \lambda  + 2} + 2 \lambda  + 5 \textbf{)} + (2 \lambda  + 5) \lvert {x}_j - {x}_{j+1}\rvert - 2(2 \lambda + 5) \tau \Big].  \notag
                 \end{eqnarray}
                 Now, if we take a line through the centers of the semi-circles ${SC}_{\kappa}^j$ and ${SC}_{\kappa}^{j+1}$ and utilize the equation $(13)$, we reach the subsequent outcome. \begin{eqnarray}
                 {r}_j - {r}_{j+1} < {d}_j + \epsilon.\notag
                 \end{eqnarray} Also, from our assumption we have \begin{eqnarray}
                 \{\lvert {x}_j - {x}_{j+1} \rvert - 2\tau \} < {d}_j.\notag
                 \end{eqnarray} Hence
                    \begin{eqnarray}
                    Z_{j{\kappa}} &<& \frac{1}{2\sqrt{2} \sqrt{ \lambda  + 2}} \Big[({d}_j + \epsilon)\textbf{(} 4\sqrt{2} \sqrt{\lambda  + 2} + 2 \lambda  + 5 \textbf{)} (2 \lambda  + 5){d}_j \Big] \notag \\
                    &=& \Big(\frac{2 \lambda  + 5}{2\sqrt{2} \sqrt{ \lambda  + 2}} + 1\Big) \Big(2{d}_j + \epsilon \Big) + \epsilon. \notag 
                    \end{eqnarray}
                    Finally, putting the value of ${d}_j$ $\textbf{(}$from equation (13)$\textbf{)}$ the above inequality converts to 
                   \begin{eqnarray}
                   Z_{j{\kappa}} < \Big[\Big(\frac{2 \lambda  + 5}{2\sqrt{2} \sqrt{\lambda  + 2}} + 1\Big) \Big(16[(2 \lambda  + 5) \{ 1 + \textbf{(}  \lambda  + 2 + \tau \textbf{)} + \textbf{(}  \lambda  + 2 + \tau \textbf{)}^2 \}] + 1 \Big) + 1 \Big] \epsilon, \notag
                   \end{eqnarray} 
                    which is our required value of $Z_{j{\kappa}}$.
                    
                    Therefore, the lemma is established.
      \end{proof} 
              
   For the value of sufficiently very small positive $\kappa$ and the upper bound of the gaps between the semi-circles in the set ${SC}^*_\kappa$, the following lemma will be useful in the sequel.
   
   \begin{lem} \label{l 2.5}
   For every sufficiently small positive number $\kappa < 10^{-11}$ and for every positive integer, where $1\le j < \textbf{I}^+,$ the value of $Z_{j\kappa}$ is less than $\frac{1}{5}$.
   \end{lem}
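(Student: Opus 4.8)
The plan is to bound $Z_{j\kappa}$ for each admissible index $j$ with $1\le j<\textbf{I}^{+}$ by combining the length estimates of Lemmas \ref{l 2.2} and \ref{l 2.3} with the comparison inequality of Lemma \ref{l 2.4}, and then to substitute the two admissible data sets $\lambda=2$, $\tau=\sqrt{(\lambda+1)(\lambda+3)}=\sqrt{15}$ and $\lambda=\tfrac{5}{3}$, $\tau=\tfrac{4\sqrt{7}}{3}$, so that the assertion reduces to an elementary numerical inequality in $\kappa$.

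First I would treat the two ``real'' directions $k=1$ (along $\mathbb{R}^{+}$) and $k=3$ (along $i^{2}\mathbb{R}^{+}=\mathbb{R}^{-}$). For a fixed $j$ the consecutive nested semi-circles $SC^{j}_{\kappa}$ and $SC^{j+1}_{\kappa}$, together with two further curves of $\{q(SC_{i,\kappa}):q\in\Gamma^{N}_{S_{\kappa}},\,i=1,2,3,4\}$, bound a fundamental domain for $\Gamma^{N}_{S_{\kappa}}$; consequently the arc of $\mathbb{R}$ between $Y_{j1\kappa}$ and $Y_{(j+1)1\kappa}$ (respectively between $Y_{j3\kappa}$ and $Y_{(j+1)3\kappa}$) is a component of $\{\Omega(\Gamma^{N}_{S_{\kappa}})\cap\mathbb{R}\}$. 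By Lemma \ref{l 2.2}, every component of $\{\Omega(\Gamma^{N}_{S_{\kappa}})\cap\mathbb{R}\}$ meeting $[-(\lambda+3),-(\lambda+1)]\cup[(\lambda+1),(\lambda+3)]$ has length less than
\[
\epsilon_{0}(\kappa):=\frac{2}{4\lambda^{2}+9\lambda+3}\,\sqrt{2.01}\,\sqrt{\kappa}\,\sqrt{6\lambda^{4}+16\lambda^{3}+3\lambda^{2}-16\lambda-8};
\]
and the proof of that lemma shows in fact that the same bound holds for \emph{every} component of $\{\Omega(\Gamma^{N}_{S_{\kappa}})\cap\mathbb{R}\}$, since each such component is the image of the reference component $\psi_{1}$ under a word $\beta_{2t}$ that does not increase Euclidean length, and $l(\psi_{i})<\epsilon_{0}(\kappa)$ for $i=1,2,3,4$. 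In particular each of the two real gaps above has length $<\epsilon_{0}(\kappa)$. (Lemma \ref{l 2.3} furnishes the companion bound $(\lambda+2)\epsilon_{0}(\kappa)$ in the direction $k=2$, but the two real directions already suffice for what follows.)

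Hence the hypothesis of Lemma \ref{l 2.4} holds with $\epsilon=\epsilon_{0}(\kappa)$, because $|Y_{jk\kappa}-Y_{(j+1)k\kappa}|<\epsilon_{0}(\kappa)$ for the two values $k=1$ and $k=3$. Lemma \ref{l 2.4} then yields
\[
Z_{j\kappa}<\Big[\Big(\tfrac{2\lambda+5}{2\sqrt{2}\,\sqrt{\lambda+2}}+1\Big)\Big(16\big[(2\lambda+5)\{1+(\lambda+2+\tau)+(\lambda+2+\tau)^{2}\}\big]+1\Big)+1\Big]\,\epsilon_{0}(\kappa).
\]
Inserting $\lambda=2$, $\tau=\sqrt{15}$ and, separately, $\lambda=\tfrac{5}{3}$, $\tau=\tfrac{4\sqrt{7}}{3}$, the bracketed factor is in each case a fixed number of order $10^{4}$ while $\epsilon_{0}(\kappa)$ is a fixed multiple of $\sqrt{\kappa}$ of order $1$; hence the right-hand side is bounded above by a constant smaller than $3\times10^{4}\,\sqrt{\kappa}$. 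For $\kappa<10^{-11}$ this gives $Z_{j\kappa}<3\times10^{4}\cdot\sqrt{10^{-11}}<10^{-1}<\tfrac{1}{5}$, which is the claim; evaluating the constants exactly for the two values of $\lambda$ leaves the additional slack that produces the sharper thresholds $4\times10^{-12}$ and $9\times10^{-12}$ appearing in Corollaries \ref{c 1} and \ref{c 2}.

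The main obstacle is not the invocation of Lemma \ref{l 2.4} but the second step: because Euclidean length is not $PSL(2,\mathbb{R})$-invariant, one must justify that each consecutive-semi-circle gap along $\mathbb{R}$ genuinely is a component of $\{\Omega(\Gamma^{N}_{S_{\kappa}})\cap\mathbb{R}\}$ controlled by $\epsilon_{0}(\kappa)$ by appealing to the non-expansion of the words $\beta_{2t}$ (from the proof of Lemma \ref{l 2.2}), not merely to the $\Gamma^{N}_{S_{\kappa}}$-equivalence of the components, and one must check that the nesting leaves at least two of the three coordinate gaps in a position where the estimates of Lemmas \ref{l 2.2} and \ref{l 2.3} apply. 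Once this is settled, the remaining step --- substituting $\lambda\in\{2,\tfrac{5}{3}\}$, computing $\tau$, and verifying the numerical inequality --- is routine arithmetic.
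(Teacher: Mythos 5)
Your overall architecture --- control two of the three coordinate gaps, feed them into Lemma \ref{l 2.4}, then do the arithmetic for $\lambda=2$ and $\lambda=\tfrac{5}{3}$ --- has the right shape, and your numerical step (a bracketed factor of order $10^{4}$ times $\epsilon_{0}(\kappa)\sim\sqrt{\kappa}$, hence well below $\tfrac{1}{5}$ when $\kappa<10^{-11}$) is fine. The genuine gap is in \emph{which} two directions you control. You take $k=1$ and $k=3$ and assert that the arc of $\mathbb{R}$ between $Y_{jk\kappa}$ and $Y_{(j+1)k\kappa}$ lies in a component of $\Omega(\Gamma^{N}_{S_{\kappa}})\cap\mathbb{R}$, so that Lemma \ref{l 2.2} bounds both real gaps. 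That cannot hold in general: the two further semicircles which, together with $SC^{j}_{\kappa}$ and $SC^{j+1}_{\kappa}$, bound a fundamental domain are themselves semicircles centered on $\mathbb{R}$ and sit inside the region $V_{j\kappa}$ between $SC^{j}_{\kappa}$ and $SC^{j+1}_{\kappa}$; their feet therefore lie in $V_{j\kappa}\cap\mathbb{R}$, i.e.\ in your two real gaps, and inside them the usual Schottky nesting produces limit points. So at least one of the gaps you want to control is \emph{not} contained in any component of $\Omega(\Gamma^{N}_{S_{\kappa}})\cap\mathbb{R}$, and Lemma \ref{l 2.2} says nothing about its length --- that uncontrollable real direction is precisely what Lemma \ref{l 2.4} exists to absorb. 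You flag exactly this point yourself (``one must check that the nesting leaves at least two of the three coordinate gaps in a position where the estimates apply''), but the proposal then asserts it for $k=1,3$ rather than proving it, and in that form it fails.

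The paper's proof obtains the second controlled direction differently: since only two extra semicircles occur in the boundary of $F^{j}_{\Gamma^{N}_{S_{\kappa}}}$ besides $SC^{j}_{\kappa}$ and $SC^{j+1}_{\kappa}$, the vertical piece $G=V_{j\kappa}\cap i\mathbb{R}^{+}$ coincides with $F^{j}_{\Gamma^{N}_{S_{\kappa}}}\cap i\mathbb{R}^{+}$ and hence lies in a component of $\Omega\big({h^{**}}_{(4,1)_{\kappa}}\Gamma^{N}_{S_{\kappa}}({h^{**}}_{(4,1)_{\kappa}})^{-1}\big)\cap i\mathbb{R}^{+}$ meeting $i[0,(\lambda+2)(\lambda+3)]$, so Lemma \ref{l 2.3} bounds the $k=2$ gap by $(\lambda+2)\epsilon_{0}(\kappa)$; this, combined (implicitly, via the set-up of Section 2) with the one real gap that genuinely lies in the fundamental domain and with Lemma \ref{l 2.4}, gives $Z_{j\kappa}<\tfrac{1}{5}$. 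In other words, Lemma \ref{l 2.3}, which you explicitly discard as unnecessary, is exactly the ingredient that replaces the real gap you cannot control. A secondary point: Lemma \ref{l 2.2} as stated only concerns components meeting $[-(\lambda+3),-(\lambda+1)]\cup[(\lambda+1),(\lambda+3)]$, so even for the good real gap you must verify this meeting condition (or fully justify your claimed extension of the bound to all components) rather than cite the lemma as a black box.
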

   
   \begin{proof} 
   Let, $F^j_{\Gamma^{N}_{S_{\kappa}}}$ (see, the light turquoise color region in Figure $5$) be a fundamental domain for the group $\Gamma_{S_{\kappa}}^{N}$. Suppose $V_{j{\kappa}}$ (see, light turquoise + purple color areas in the figure below) is the doubly-connected domain enclosed by the semi-circles $SC_{\kappa} ^j$ and $SC_{\kappa} ^{j+1}$ in the hyperbolic plane with the boundary $\mathbb{R} \cup \{ \infty \}$. Observe that, apart from the two semi-circles $SC_{\kappa} ^j$ and $SC_{\kappa} ^{j+1}$ there are also two semi-circles in the boundary of the fundamental domain $F^j_{\Gamma^{N}_{S_{\kappa}}}$. So, we are gaining exactly one segment viz., $G$ (see, the red vertical piece) of $\{V_{j\kappa} \cap i\mathbb{R}^+\}$ that is equal in the fragment $\{F^j_{\Gamma^{N}_{S_{\kappa}}} \cap i\mathbb{R}^+\}$, where $G$ is situated within the upper-half plane (between the two curves $SC_{\kappa} ^j$ and $SC_{\kappa} ^{j+1}$). Hence, $G$ is also lying on the component of $\{\Omega\textbf{(}{h^{**}}_{(4,1)_{\kappa}} \Gamma_{S_{\kappa}}^{N} ({h^{**}}_{(4,1)_{\kappa}})^{-1}\textbf{)} \cap i\mathbb{R}^+\}$ meeting the closed interval $i[0,$ $(\lambda +2)(\lambda + 3)]$. On the other hand, from Lemma \ref{l 2.3} we get that the length of the portion $G$ is less than the ensuing quantity \begin{eqnarray}
             \Big\{\frac{2}{(4 \lambda^2 + 9\lambda + 3)} \times (\lambda +2) \times \sqrt{2.01} \times \sqrt{\kappa} \times \sqrt{6 \lambda^4 + 16 \lambda^3 + 3 \lambda^2 - 16 \lambda -8}\Big\}. \notag
            \end{eqnarray}
    Now, if we set $\kappa < 10^{-11}$, then we obtain $Z_{j\kappa} < \frac{1}{5}$, which is the required value of $Z_{j\kappa}$. This proves the lemma.
   \end{proof}
     \begin{center}
       \hspace*{-0.5cm}
       \includegraphics[width=18cm, height=6.4cm]{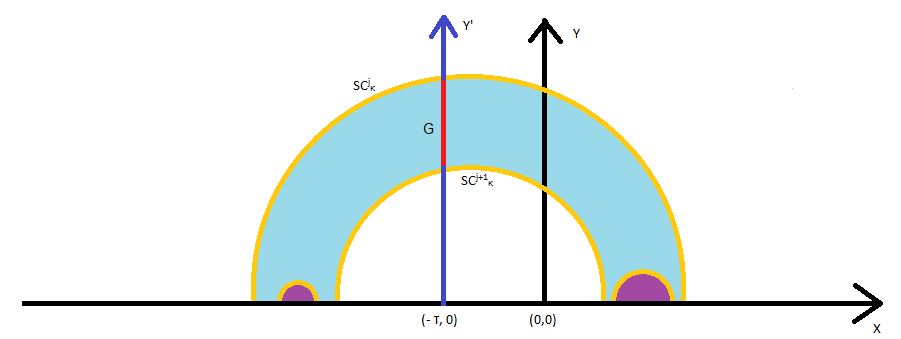} 
       $Figure: 5$
       \end{center}

      \section{\textbf{PROOF OF OUR MAIN THEOREM }}
     After establishing all the needful results in Section $2$, we are now ready to derive the main theorem of this paper.

       \begin{proof}[\textbf{Proof of Theorem \ref{t 1}.}]
        Recall that, we have intended to prove this theorem by contradiction.
                 Let, $SC_{\kappa^*}$ be a semi-circle meeting the closed interval $[-( \lambda + 2)(\lambda +3),$ $- ( \lambda  + 3)]$ which is analogous to the semi-circle $SC_{4,\kappa}$ for the group generated by $\Big(h_{(4,1)_{\kappa}} ^{**}\Big)^2$. The semi-circle $SC_{4,\kappa}$ separates the fixed points $\tau$ and $\{-\tau\}$ of the transformation ${h^{**}}_{(4,1)_{\kappa}}$. So, the semi-circle $SC_{\kappa^*}$ also separates these fixed points. Since the semi-circle $SC_{\kappa^*}$ is outside the semi-circle $SC_{3,\kappa}$, its image ${h^*}_{(3,2)_{\kappa}}(SC_{\kappa^*})$ meets the closed interval $[ (\lambda - 1),$ $(\lambda  + 1)]$ twice. Again, the fixed point $\tau$ and the point
                   $\Big\{-( \lambda + 2)(\lambda +3)\Big\}$ are outside of the semi-circle $SC_{\kappa^*}$. Hence, the points ${h^*}_{(3,2)_{\kappa}}(\tau)$ and  ${h^*}_{(3,2)_{\kappa}}\Big\{-( \lambda + 2)(\lambda +3) \Big\}$ will be separated by the semi-circular curve ${h^*}_{(3,2)_{\kappa}}(SC_{\kappa^*})$ and placed within this curve by situating on the boundary of the hyperbolic plane. Therefore, the diameter of ${h^*}_{(3,2)_{\kappa}}(SC_{\kappa^*})$ will be greater than $\frac{1}{5}$, since \begin{eqnarray}
                   \Big \lvert {h^*}_{(3,2)_{\kappa}} \Big\{-( \lambda + 2)(\lambda +3) \Big\} -
                                {h^*}_{(3,2)_{\kappa}}\Big(\tau \Big) \Big \rvert > \frac{1}{5}. \notag
                   \end{eqnarray} But Lemma \ref{l 2.5} shows that the gaps between the curves in the set ${SC}^*_\kappa$ are less than $\frac{1}{5}$. So, ${h^*}_{(3,2)_{\kappa}}(SC_{\kappa^*})$ intersects some of the semi-circular curves $SC_{\kappa} ^j$ (where $j = 1, 2,3, ..., \textbf{I}^+$) in the family ${SC}^*_\kappa$. Hence, all images of the semi-circles $SC_{1,\kappa}, SC_{2,\kappa},$
                   $SC_{3,\kappa}$, and $SC_{4,\kappa}$ are not disjoint, which is impossible. So we reach a contradiction. Therefore, the Schottky group $\Gamma^{N}_{S_{\kappa}}$  $=$  $<{h^*}_{(3,2)_{\kappa}}, {h^{**}}_{(4,1)_{\kappa}}>$ is a non-classical Fuchsian Schottky group. This completes the proof of Theorem \ref{t 1}.
       \end{proof}

       \section{\textbf{EXAMPLES OF FUCHSIAN SCHOTTKY GROUPS WITH NON-CLASSICAL GENERATING SETS}}
       In the previous sections, we have provided the structure of the rank $2$ Fuchsian Schottky groups with non-classical generating sets by establishing Theorem \ref{t 1}. Now, in Theorem \ref{t 1} if we put the value of $\lambda$ is equal to $2$ and $1.6666666667$ in the hyperbolic elements $(1)$ and $(2)$ in Section $1$, we get the succeeding four M\"obius transformations. 
       
       Let, $\mu$ $=$ $1.6666666667$.
        \begin{eqnarray}
        {h^1}_{(3,2)_{\kappa_1}} : z & \longrightarrow & \frac{2 (1-{\kappa_1})^{-1} z + (1-{\kappa_1})\{4 (1-{\kappa_1})^{-2} -1\}}{(1-{\kappa_1})^{-1} + 2 (1-{\kappa_1})^{-1}} \\{h^2}_{(4,1)_{\kappa_1}} : z & \longrightarrow & \frac{4 (1-{\kappa_1})^{-1} z + (1-{\kappa_1})\{16 (1-\kappa_1)^{-2} -1\}}{(1-\kappa_1)^{-1} + 4 (1-\kappa_1)^{-1}} \\
        {h^3}_{(3,2)_{\kappa_2}} : z & \longrightarrow & \frac{\mu (1-\kappa_2)^{-1} z + (1-\kappa_2)\{\mu^2 (1-\kappa_2)^{-2} -1\}}{(1-\kappa_2)^{-1} + \mu (1-\kappa_2)^{-1}} \\ \text{  and  }
        {h^4}_{(4,1)_{\kappa_2}} : z & \longrightarrow & \frac{(2+\mu) (1-\kappa_2)^{-1} z + (1-\kappa_2)\{(2+\mu)^2 (1-\kappa_2)^{-2} -1\}}{(1-\kappa_2)^{-1} + (2+\mu) (1-\kappa_2)^{-1}}
        \end{eqnarray}
       
      For $\lambda =2$, we examine in Theorem \ref{t 1} that when we take the value of $\kappa_1$ is less than $4 \times 10^{-12}$ then the Fuchsian Schottky group $\Gamma^{N}_{S_{\kappa_1}}$ generated by the two hyperbolic M\"obius transformations ${h^1}_{(3,2)_{\kappa_1}}$ and  ${h^2}_{(4,1)_{\kappa_1}}$ is non-classical in the upper-half plane. So Corollary \ref{c 1} is done. On the other hand, for $\lambda = 1.6666666667$, the group $\Gamma^{N}_{S_{\kappa_2}}$ generated by ${h^3}_{(3,2)_{\kappa_2}}$ and ${h^4}_{(4,1)_{\kappa_2}}$ is a Fuchsian Schottky group with non-classical generating sets when $\kappa_2 < 9 \times 10^{-12}$. Hence Corollary \ref{c 2} also follows.
      
      Therefore, these two corollaries (Corollary \ref{c 1} and \ref{c 2}) give rise to two non-trivial examples of the Fuchsian Schottky groups with non-classical generating sets in the hyperbolic plane. \\
      

\textbf{Acknowledgment:} The second author greatly acknowledges The Council of Scientific and Industrial Research (File No.: 09/025(0284)/2019-EMR-I), Government of India, for the award  of SRF.\\

\section{\textbf{DATA AVAILABILITY STATEMENT}}
Our manuscript has no associated data.

\section{\textbf{DECLARATIONS: CONFLICT OF INTEREST}}
The authors declare that they have no conflict of interest.


\begin{thebibliography}{99}


       	\bibitem{Beardon}
       	       	       	          	Beardon, A., \textit{The geometry of discrete groups},
       	       	       	          	Springer-Verlag, New York, \textbf{1983}.
       	
       	\bibitem{Button}
       	        Button, J., \textit{All Fuchsian Schottky groups are classical Schottky groups}, 
       	        Geom. Topol. Monogr., \textbf{1} (1998),  117-125.
       	        
       	   \bibitem{Chuckrow}
       	          	           Chuckrow, V., \textit{On Schottky groups with applications to Kleinian groups},
       	          	          	Ann. of Math., \textbf{88} (1968), 47-61. 
       	     
       	       
       	   
       	 \bibitem{Katok}
       	        Katok, S., \textit{Fuchsian groups}, Chicago Lectures in Mathematics, University of Chicago Press, Chicago, IL, \textbf{1992}.                 	            	    	    
       	\bibitem{Marden}
               Marden, A., \textit{Schottky groups and circles}, Contributions to Analysis (a collection of papers dedicated to Lipman Bers), Academic Press, New York,  (\textbf{1974}),  273-278. 
      \bibitem{Marden1974}
             	Marden, A., \textit{The geometry of finitely generated Kleinian groups}, Ann. of Math., \textbf{99} (1974), 383-462.
       	  \bibitem{Maskit1988}
       	        	          Maskit, B., \textit{Kleinian groups},
       	        	         	Springer-Verlag, Berlin- Heidelberg- New York, \textbf{1988}.      	
       	       	
       	 \bibitem{Sato}
       	      Sato, H., \textit{On a paper of Zarrow}, Duke Math. J., \textbf{57} (1988), 205-209.
       	      
       	  \bibitem{ShaikhRoy21}
       	      Shaikh, A. A. and Roy, U., \textit{Construction of Fuchsian Schottky group with conformal boundary at infinity,} Palestine Jour. of Math., \textbf{13(4)} (2024), 504-517.
       	      
       	   
       	      
       	  \bibitem{Williams}
       	     Williams, J. P., \textit{Classical and non-classical Schottky groups,} Doctoral Thesis, University of Southampton, School of Mathematics, \textbf{2009}, 125pp.        
       	
       	
       	
        \bibitem{Yamamoto}
              Yamamoto, H., \textit{An example of a non-classical Schottky group},
              Duke Math. J., \textbf{63(1)} (1991), 193-197.
              
        \bibitem{Zarrow}
             Zarrow, R., \textit{Classical and non-classical Schottky groups,} Duke Math. J., \textbf{42} (1975), 717-724.   
       	
       	
       	
       
       	
       
       	
       	
       
       	
       	
       	
       	
       	
       	
       	
       
       	
       
       	
       	
       
       	
       	
       	
       	
       	
       	
       	
       	
       	
       	
       
       
       	
       
       
       	
       
       
       
       
       
       	
       
       
      
      
      
       
      
      
      
      
      
       
       
       
       
      
      
      
       
       
       
       
       
      
      
      
       
       
       
       
       
       
       
       
       	
       	       	       
       	      \end{thebibliography}
                         \end{document}